\def\R{{\mathbb R}}
\def\N{{\mathbb N}}
\def\<{\langle}
\def\>{\rangle}
\def\P{{\mathbb P}}
\def\E{{\mathbb E}}
\def\eps{\epsilon}
\def\I{\mathcal I}
\def\i{\underline i}
\def\j{\underline j}
\def\r{\underline r}
\def\ep{\underline{\eps}}
\def\k{\underline k}
\def\x{\underline{x}}
\def\m{\underline{m}}
\def\n{\underline{n}}
\def\p{\mathfrak{p}}
\def\pp{\mathfrak{P}}
\def\cP{\mathcal P}
\def\cc{\mathcal C}
\def\ss{\mathcal S}
\def\qq{\mathfrak{Q}}
\def\c{\mathfrak{c}}
\def\d{\mathfrak{d}}
\def\q{\mathfrak{q}}
\def\cC{\mathcal{C}}
\newcommand{\bel}{\begin{equation}\label}
\newcommand{\ee}{\end{equation}}
      \newtheorem{theorem}{Theorem}[section]
       \newtheorem{corollary}[theorem]{Corollary}
       \newtheorem{lemma}[theorem]{Lemma}
\theoremstyle{definition}
\newtheorem{definition}{Definition}[section]
\newtheorem{example}{Example}[section]
\begin{document}
\title[A new prior for discrete DAG models]{A new prior for discrete DAG models with  a restricted set of directions}
\author{H\'el\`ene Massam}
\address{Department of Mathematics and Statistics, York University, Toronto, M3J1P3, Canada}
\email{massamh@yorku.ca}
\author{Jacek Weso\l owski}
\address{Wydzial Matematyki i Nauk Informacyjnych, Politechnika Warszawska, Warszawa, 00662, Poland}
\email{wesolo@mini.pw.edu.pl}

\subjclass{62H17, 62F15, 62E99}
\keywords{Bayesian learning, strong directed hyper Markov, conjugate priors, prior knowledge, hyper Dirichlet distribution}

\begin{abstract}
In this paper, we first develop a new family of conjugate prior distributions for the cell parameters of discrete graphical models Markov with respect to a set $\cP$ of moral directed acyclic graphs with skeleton a given decomposable graph $G$. Such families arise when the set of conditional independences between discrete variables is given and can be represented by a decomposable graph and additionally, the direction of certain edges is imposed by the practitioner. This family, which we call the $\cP$-Dirichlet, is a generalization of the hyper Dirichlet given in Dawid and Lauritzen (1993): it keeps the  strong directed hyper Markov property for every DAG in $\cP$ but increases the flexibility in the choice of its parameters, i.e. the hyper parameters.

Our second contribution is a characterization of the $\cP$-Dirichlet, which yields, as a corollary, a characterization of the hyper Dirichlet and a characterization of the Dirichlet also. Like that  given by Geiger and Heckerman (1997), our characterization of the Dirichlet is based on local and global independence of the probability parameters but we need not make the assumption of the existence of a positive density function. We use the method of moments for our proofs.
\end{abstract}

\maketitle

\section{Introduction}
The Dirichlet distribution and distributions derived from the Dirichlet are essential ingredients of Bayesian inference in the analysis of discrete data. For high-dimensional data, Dirichlet-type distributions are often used in conjunction with graphical  models. Let $V=\{1,\ldots,d\}$ be a finite set indexing the variables. A graphical model for the discrete random variable $X=(X_v, v\in V)$ is a statistical model where the dependences between $X_v, v\in V$ are expressed by means of a graph $G$.
We will assume here that the data is gathered under the form of a $d$-dimensional contingency table and that the cell counts follow a multinomial  distribution with cell probabilities $(p(\i), \i\in \I)$ where $\I$ is the set of cells in the contingency table.

If there are no independences between the variables $X_v, v\in V$, then the cell counts follow a standard multinomial distribution and  the Diaconis-Ylvisaker (1979) prior distribution on
$(p(\i), \i\in \I)$ is the Dirichlet distribution. If the conditional independences between the variables can be expressed by means of a directed acyclic graph (henceforth abbreviated DAG),  the usual priors are independent Dirichlet prior distributions on
\begin{equation}
\label{B}
\Big(p(i_v|X_{\p_v}=\i_{\p_v}), i_v\in \I_v\Big), \i_{\p_v}\in \I_{\p_v}, v\in V,
\end{equation}
where, for $A\subset V$, $\I_A$ is the finite set of values that $X_A=(X_v, v\in A)$ can take, $\p_v$ denotes the set of parents of vertex $v$ in the DAG and $\i_{\p_v}=(i_l,l\in \p_v)$ denotes the $\p_v$-marginal cell. This means that the probability parameters associated with each node are mutually independent (global independence) and for a given node, the parameters associated with various instances of its parents are also mutually independent (local parameter independence).

Geiger and Heckerman (1997, Theorem 3) have shown that if the distribution of positive random variables that sum to 1 has a strictly positive density and possesses the property of local and global independence for the two directions $\{1,2,\ldots, d-1,d\}$ and $\{d,1,2,\ldots,d-1\}$ of a complete DAG on $V$, this distribution must be the Dirichlet distribution. This is a characterization of the Dirichlet through local and global independence using two particular complete DAG's. This result has important practical ramifications for the choice of the hyper parameters of the Dirichlet priors for model selection, as emphasized by Geiger and Heckerman (1997). Indeed, for model selection in the class of models Markov with respect to DAG's on $V$, if we request that  a node with the same parents in two distinct DAG's has identical prior distribution on the parameters associated with this node in both structures (this is called parameter modularity) and if we also
request local and global independence of the parameters for all possible DAG's, then the priors on  the variables in \eqref{B} must all be derived from one single Dirichlet distribution $\mbox{Dir}(\alpha(i), i\in I)$ on $(p(\i),\,\i\in \I)$. The hyper parameters $\alpha(i)$ of such a Dirichlet are of the form
\begin{equation}
\label{1}
\alpha(i)=\alpha \theta(i)
\end{equation}
where $\theta(i)$ is the expected value of $p(i)$ and $\alpha$,  called the {\em equivalent sample size}, (which we can think of as the total cell count in a fictive contingency table representing prior knowledge) will be added to the actual total cell count $N=\sum_{\i\in \I}n(\i)$ in posterior inference. This $\alpha$ will therefore
represent our confidence in our choice of the particular fictive table. Most of the time, in practice and in the absence of prior expert information, we take $\theta(i)=\frac{1}{|\I|}$ where $|\I|$ is the total number of cells. This implies that, often in practice, our flexibility in the choice of the hyper parameters is restricted to the choice of $\alpha$.
\vspace{2mm}

If  there are  conditional independences between the variables  and they can be expressed by means of a decomposable undirected graph $G$, then the multinomial distribution on the cell counts is Markov with respect to $G$ and the Diaconis-Ylvisaker (1979) prior  on $(p(\i), \i\in \I)$ is the hyper Dirichlet defined by David and Lauritzen (1993). A simple calculation (see equations \eqref{cands} in Section \ref{prelim}) shows that the parameters of the hyper Dirichlet  have an interpretation of the type
 \eqref{1} and we are therefore
faced with the same lack of flexibility in the choice of the hyper parameters: again, we are restricted to the choice of one parameter only. Moreover it is intuitively clear (and it can be proved, see Theorem \ref{hyp-dir})
 that if, for any DAG with skeleton $G$, Markov equivalent to $G$, we make the change of variable from $p(i),i\in \I$ to the variables in equation \eqref{B}, i.e., $\Big(p(i_v|X_{\p_v}=\i_{\p_v}), i_v\in \I_v\Big), \i_{\p_v}\in \I_{\p_v}, v\in V$, then, for each of these DAGs, the  distribution induced from the hyper Dirichlet also possesses the property of local and global independence. One can then be led to think that the property of local and global independence in all possible directions imposes strict restrictions on the choice of the hyper parameters of the prior on  $p(i),i\in \I$.

 \vspace{3mm}

In this paper,  we therefore consider a family $\cP$ of DAG's with skeleton a decomposable graph $G$ and Markov equivalent to $G$ but we do not include in this family all the DAG's Markov equivalent to $G$: $\cP$ is a restricted family of such DAG's.
For the set of discrete multinomial models Markov with respect to any of the DAG's in $\cP$, we  build a new family of conjugate prior distributions for the parameters $({\bf p}(\i),\, \i \in \I)$ of this set of models. As we show in Section 5.3, such a family of priors will be the right family of prior distributions to use  if we are given by an expert the set of conditional independences (those given by a decomposable graph $G$) and  the restriction  that some edges must have a given direction. In that case, there is no need to require local and global independence for all DAG's Markov with respect to $G$. We only need to require it for those DAGs' containing the compulsory edges.
This new family of priors possesses the property of local and global independence and also, as we shall see later, the strong directed hyper Markov property. Using these priors thus facilitates posterior inference. Moreover, because of the restriction on $\cP$, it offers more hyper parameter flexibility than the hyper Dirichlet. The development and study of this new family of priors, called $\cP$-Dirichlet, is the first contribution of this paper. The second contribution is  a characterization (Theorem \ref{main1}) of this new family through local and global independence. As corollaries, we obtain a characterization of the hyper-Dirichlet and also the characterization of the Dirichlet as given by Geiger and Heckerman but without assuming the existence of a density for ${\bf p}=({\bf p}(\i),\, \i\in \I)$.

The remainder of this paper is organized as follows. The  $\cP$-Dirichlet family of priors on $(p(\i), \i\in \I$,  is defined   in Section \ref{def}, through  local and global independence with respect to the collection $\cP$ of DAGs.
In Section \ref{moments}, we first give an example of a $\cP$-Dirichlet family and then we  derive a general formula for the moments of the $\cP$-Dirichlet which will be used in its characterization,  Theorem \ref{main1}. In the expression of the moments, we will see that the role played by the collection of cliques and separators in the hyper Dirichlet is played by two larger collections, $\qq$ and $\pp$, of complete subsets of the decomposable graph $G$ which include respectively, the set of cliques and separators. These additional complete subsets yield more parameters for the $\cP$-Dirichlet distribution and thus increase flexibility.  In Section \ref{prior}, we give the dimension of the $\cP$-Dirichlet  family and show that it is always larger than (or equal to )  the dimension of the hyper Dirichlet family with the same skeleton. We also show that the $\cP$-Dirichlet is a conjugate family for the multinomial distribution Markov with respect to $G$ and that it has the strong directed hyper Markov for any DAG in $\cP$.
In Section \ref{characterization}, we show that, when $\cP$ is what we  call a   {\em separating} family of DAG's,  local and global independence characterizes the $\cP$-Dirichlet distribution. We will look at special cases. When  $G$ is a tree, for the $\cP$-Dirichlet to be the hyper Dirichlet, $\cP$  needs only be the set of  DAG's with their root at a leaf of the tree, not the set of all possible DAG's Markov equivalent to $G$ . When $G$ is complete, a separating  family of DAG's can be formed by taking the orders $\{1,2,\ldots,d\}$ and $\{d,1,2,\ldots,d-1\}$ as was done by Geiger and Heckerman (1997) but this is not the only separating family and the characterization of the Dirichlet can be obtained through any separating family of DAG's on a complete graph $G$.

We conclude with Section \ref{apps} summarizing the contributions of this paper.
Proofs of the moment formula in Theorem \ref{momenty} and the characterization, Theorem \ref{main1} are given in the Appendix, Section \ref{proofs}. The next section is devoted to preliminaries.

\section{Preliminaries}
\label{prelim}
\subsection{ Graph theoretical notions}
Let $V=\{1,\ldots,d\}$ be a finite set of indices for the  $d$ criteria defining the contingency table with our data. We assume that the criterion labelled by $v\in V$ can take values in a finite set $I_v$. Let
$$\I=\prod_{v\in V}\I_{v}$$
be the  set of cells $\i=(i_v,\;v\in V)$. If $D\subset V$  and $\i\in \I$ we write
$\i_D=(i_v, {v\in D})$ for the $D$-marginal cell. Let $G=(V,E)$ be an undirected  graph as defined above. A graph is said to be decomposable if it does not have any cycle of length greater than or equal to 4. A subset $D$ of $V$ is said to be complete if all vertices of $D$ are linked to each other with an edge. A clique is a  complete subset maximal with respect to inclusion. Let $\cC$ denote the set of cliques. For any given order $C_1,\ldots,C_K$ of the cliques of $G$ we will define
$$H_j=\cup_{l=1}^jC_l, \;\;S_j=C_j\cap H_{j-1}\;\;\mbox{and}\;\;R_j=C_j\setminus H_{j-1}=C_j\setminus S_j, \;\;\;j=1,\ldots K,$$
with $S_1=\emptyset$, called respectively the $j$-th history, $j$-th separator and $j$-th residual.
An order $C_1,\ldots,C_K$ of the cliques of $G$ is said to be perfect if for any $j>1$, there exists an $i<j$ such that
$$(\cup_{l=1}^{j-1} C_l)\cap C_j\subset C_i.$$
 It can be shown (Proposition 2.17,  Lauritzen, 1996) that a graph $G$ is decomposable if and only if its set of cliques admits a perfect order. It can also be shown that the set of separators $\ss$ associated with a perfect ordering of the cliques is independent of the perfect ordering considered and it is the set of minimal separators for the graph $G$.

For a given decomposable  $G$, we will consider the DAG's with skeleton $G$ which are moral. A DAG is said to be moral
if when $v$ and $v'$ are both parents of $w$, then there is an edge between $v$ and $v'$. In this paper, we will deal exclusively with moral DAG's.
  Note than any  DAG with skeleton $G$ is in one-to-one correspondence with the function $\p:V\to 2^V$ which describes the parents of each vertex or with the function $\c:V\to 2^V$ which describes the children of each vertex. We will write $\p_v$ and $\c_v$ for the set of parents and children of $v\in V$, respectively. The set of descendants, that is children of children and so on, of a vertex $v$ is denoted ${\mathfrak d_v}$ and the set of nondescendants will be denoted ${\mathfrak n\mathfrak d}_v$.
 Additionally denote $\q_v=\p_v\cup\{v\}$ for any $v\in V$ and let $\q:V\to 2^V$ denote the corresponding mapping.  Note that for any $v\in S$ for some separator $S$, due to morality, there exists a unique clique $C\supset S$ such that $\p_v\subset C$.

 We will now define some additional graph-theoretical notions needed in this work.
 \begin{definition}
For $S\in\mathcal S$ and $C\in\mathcal C$, we say that $C$ and $S$ {\em are paired} by a given perfect ordering  $o=(C_1,\ldots,C_K)$ of the cliques if $$\exists\,l\in\{1,\ldots,K\}:\qquad S=S_l,\quad C=C_l$$ (here we allow that $S=S_1=\emptyset$). If $S$ and $C$ are paired by $o$, we write $S\stackrel{o}{\to} C$.
\end{definition}
 \begin{definition}
 Given a parent function $\p$, a perfect order $o =(C_1,\ldots, C_K)$  of the cliques is said to be $\p$-perfect if for any $l=1,\ldots,K$ there exists a $v\in C_l\setminus S_l$ such that $S_l=\p_v$.
 \end{definition}
With a given  $\p\in\cP$ and a $\p$-perfect  order  $o=(C_1,\ldots,C_K)$ of the cliques,  we associate the following numbering of the vertices: $\forall\,l\in\{1,\ldots,K\}$
\bel{numb}
v=\left\{\begin{array}{ll}
           v_{l,s_l+1} & \mbox{iff}\;\;\p_v=S_l,\\
           v_{l,j} & \mbox{iff}\;\;\p_v=\q_{v_{l,j-1}},\;j=s_l+2,\ldots,c_l,
         \end{array}\right.
\ee
where  $c_l$ and $s_l$ denote the cardinality of $C_l$ and $S_l$ respectively.
We note that if $v=v_{l,c_l}$ then $\q_v=C_l$, $l=1,\ldots,K$ and  if $v=v_{1,1}$, the source vertex of $\p$, then $\p_v=\emptyset$.



\subsection{Markov properties and the hyper Dirichlet}
Let ${\bf X}=(X_1,\ldots,X_d)$ be a discrete random vector with variables  $X_v$ taking values in $\I_v$, $v\in V$. Given  an undirected graph $G$,
the distribution of $X$ is said to be Markov with respect to the  $G$ if $X_i$ and $X_j$ are independent  given ${\bf X}_{V\setminus \{i,j\}}$ whenever the pair $\{i,j\}$ does not belong to $E$.

\noindent Given a DAG ${\mathcal G}$ ,  we say that ${\bf X}$ is Markov with respect to ${\mathcal G}$ if, for any $v\in V$,
\bel{dagind}
X_v\perp {\bf X}_{\mathfrak{n} \mathfrak{d}_v}\;|\;{\bf X}_{\p_v}.
\ee
Therefore if ${\bf X}$ is  Markov with respect to ${\mathcal G}$, we have
\bel{dag}
p(\i):=\P({\bf X}=\i)=\P(X_v=i_v,\;v\in V)=\prod_{v\in V} \P(X_v=i_v|{\bf X}_{\p_v}=\i_{\p_v}).
\ee
The two formulations (\ref{dagind}) and (\ref{dag}) are equivalent.

In this paper, we consider only  DAGs ${\mathcal G}$  with a decomposable skeleton $G$ and which  encode the same conditional independences as $G$. By Lemma 3.21 of Lauritzen (1996), such a DAG is necessarily moral and it has been shown in Andersson, Madigan, Perlman and Triggs (1997) that, for this  DAG, we can use a numbering of the  vertices  of the type \eqref{numb}. With this numbering
the two models,  Markov with respect to $G$ and  ${\mathcal G}$,  define the same set of conditional independences: they are said to be Markov equivalent.
It is well known that for any $\i\in\I$, we have the following decomposition of the probability that ${\bf X}=\i=(i_1,\ldots,i_d)$,
\bel{dec}
\P({\bf X}=\i)=\P(X_v=i_v,\;v\in V)=\frac{\prod_{l=1}^K\,\P({\bf X}_{C_l}=\i_{C_l})}{\prod_{l=2}^K\,\P({\bf X}_{S_l}=\i_{S_l})}.
\ee

Then, if the cell counts $(N(\i), \;\i\in \I)$  follow a  Multinomial $(N, p(\i), \i\in \I)$  with $p(\i)$  as given in (\ref{dag}), the density of $(N(\i), \i\in \I)$ with respect to the counting measure is proportional to
\begin{eqnarray}
\prod_{\i\in \I}p(\i)^{n(\i)}
&=&\frac{\prod_{l=1}^K\prod_{\i_{C_l}\in  \I_{C_l}} \P({\bf X}_{C_l}=\i_{C_l})^{n(\i_{C_l})}}
{\prod_{l=2}^K\prod_{\i_{S_l}\in  \I_{S_l}}\P({\bf X}_{S_l}=\i_{S_l})^{n(\i_{S_l})}}\nonumber\\
&=&\prod_{v\in V} \prod_{i_{\p_v}\in \I_{\p_v}}\prod_{i_v\in \I_v}
\P({\bf X}_v=i_v|{\bf X}_{\p_v}=\i_{\p_v})^{n(\i_{\q_v})}\label{multinomial}
\end{eqnarray}
where the two equalities follow from  (\ref{dec}) and (\ref{dag}), respectively.

We note that for any $l=1,\ldots,K$ the marginal counts satisfy
\bel{cdec}
\sum_{\i_{C_l}\in \I_{C_l}}n(\i_{C_l})=N,\;\sum_{\j_{C_l}\in \I_{C_l}:\j_{S_l}=\i_{S_l}}\,n(\j_{C_l})=n(\i_{S_l}),\;\sum_{\i_{S_l}\in \I_{S_l}}n(\i_{S_l})=N
\end{equation}
and, for $v\in C_l$ and $\q_v\subseteq C_l$,
\begin{equation}
\label{cdag}
\sum_{\j_{C_l}\in I_{C_l}:\j_{\q_v}=\i_{\q_v}}n(\j_{C_l})=n(\i_{\q_v}), \;i_{\q_v}\in \I_{\q_v}.
\ee

In Bayesian inference we view the parameter of the multinomial distribution as a random vector.
Let ${\bf p}=({\bf p}(\i),\,\i\in \I)$ be the vector of random cell probabilities, that is ${\bf p}(\i)>0$ for any $\i\in \I$ and $\sum_{\i\in\I}\,{\bf p}(\i)=1$. Note that we write ${\bf p}=({\bf p}(\i),\,\i\in\I)$ for the random vector of cell probabilities, and $p=(p(\i),\,\i\in\I)$ for its value. Let
$$
\P_p({\bf X}=\i):=\P({\bf X}=\i|{\bf p}=p)=p(\i),\quad \i\in \I.
$$
The random variables $\P_{\bf p}({\bf X}=\i)=\P({\bf X}=\i|{\bf p})$, $\i\in\I$, are the variables of interest in this paper.

As mentioned in the introduction the Diaconis-Ylvisaker conjugate  prior on ${\bf p}$ is the hyper Dirichlet first identified by Dawid and Lauritzen (1993) with density
\begin{equation}
\label{hp}
\frac{\Gamma(\alpha)\prod_{l=2}^K\prod_{\i_{S_l}\in \I_{S_l}} \Gamma(\alpha^{S_l}_{\i_{S_l}})}
{\prod_{l=1}^K\prod_{i_{C_l}\in \I_{C_l}} \Gamma(\alpha^{C_l}_{\i_{C_l}})}\,\frac{\prod_{l=1}^K
\prod_{\i_{C_l}\in  \I_{C_l}}\,(p^{C_l}_{\i_{C_l}})^{\alpha^{C_l}_{\i_{C_l}}-1}}
{\prod_{l=2}^K
\prod_{\i_{S_l}\in  \I_{S_l}}\,(p^{S_l}_{\i_{S_l}})^{\alpha^{S_l}_{\i_{S_l}}-1}} ,
\end{equation}
where $p^{C_l}_{\i_{C_l}}$ and $p^{S_l}_{\i_{S_l}}$ are the values of the random variables ${\bf p}^{C_l}_{\i_{C_l}}=\P_{\bf p}({\bf X}_{C_l}=\i_{C_l})$ and ${\bf p}^{S_l}_{\i_{S_l}}=\P_{\bf p}({\bf X}_{S_l}=\i_{S_l})$, $l=1,\ldots,K$, respectively, and where the hyperparameters satisfy constraints parallel to \eqref{cdec}, namely
\bel{cdecalpha}
\sum_{\i_{C_l}\in \I_{C_l}}\alpha^{C_l}_{\i_{C_l}}=\alpha,\;\sum_{\j_{C_l}\in \I_{C_l}:\j_{S_l}=\i_{S_l}}\,\alpha^{C_l}_{\j_{C_l}}=\alpha^{S_l}_{\i_{S_l}},\;\sum_{\i_{S_l}\in \I_{S_l}}\,\alpha^{S_l}_{\i_{S_l}}=\alpha.
\ee
Unlike the case of the Dirichlet on the complete graph, this density is typically defined on a complicated manifold determined not only by summation to one but also by all the conditional independence properties encoded in $G$. This is one of the reasons our approach is through moments and not densities.

We now want to recall the expression of the moments of the hyper Dirichlet.
For simplicity, let us write
$$\alpha(G,\I)=(\alpha,\,\alpha^{C_l}_{\k},\,\k\in\I_{C_l},\,l=1,\ldots,K,\,\alpha^{S_l}_{\k},\,\k\in\I_{S_l},\,l=2,\ldots,K).$$
For any $d$-way table $\r=(r_{\i},\,\i\in\I)$ of non-negative integers, we write the $E$-marginal counts as
$$r^E_{\underline{e}}=\sum_{\i:\,\i_E=\underline{e}}r(\i).$$
In particular, we will use $\r^{C_l}_{\i_{C_l}}$, $\r^{S_l}_{\i_{S_l}}$. By analogy with $\alpha(G,\I)$, we will also use the notation $r(G,\I)$ and write the normalizing constant of the hyper Dirichlet as
\bel{nc}
\mathcal{Z}(\alpha(G,\,\I))=\frac
{\prod_{l=1}^K\prod_{\i_{C_l}\in \I_{C_l}} \Gamma(\alpha^{C_l}_{\i_{C_l}})}{\Gamma(\alpha)\prod_{l=2}^K\prod_{\i_{S_l}\in \I_{S_l}} \Gamma(\alpha^{S_l}_{\i_{S_l}})}
\ee

The moments are then equal to
\bel{moments}
\E\,\prod_{\i\in\I}\,\left[\P_{\bf p}({\bf X}=\i)\right]^{r_{\i}}=\E\, \frac{\prod_{l=1}^K\prod_{\i_{C_l}\in  \I_{C_l}} \P_{\bf p}({\bf X}_{C_l}=\i_{C_l})^{r^{C_l}_{\i_{C_l}}}}
{\prod_{l=2}^K\prod_{\i_{S_l}\in  \I_{S_l}}\,\P_{\bf p}({\bf X}_{S_l}=\i_{S_l})^{r^{S_l}_{\i_{S_l}}}}=\frac{\mathcal{Z}(\alpha(G,\I)+r(G,\I))}{\mathcal{Z}(\alpha(G,\I))}.
\ee
In particular,
\begin{eqnarray*}
\E\,\left[\P_{\bf p}({\bf X}=\i)\right]=
\frac{\prod_{l=1}^K \Gamma(\alpha^{C_l}_{\i_{C_l}}+1)
}{
\Gamma(\alpha+1)\prod_{l=2}^K \Gamma(\alpha^{S_l}_{\i_{S_l}}+1)
}
\frac{
\Gamma(\alpha)\prod_{l=2}^K \Gamma(\alpha^{S_l}_{\i_{S_l}})
}{\prod_{l=1}^K \Gamma(\alpha^{C_l}_{\i_{C_l}})
}=
\frac{\prod_{l=1}^K \alpha^{C_l}_{\i_{C_l}}}{\alpha \prod_{l=2}^K \alpha^{S_l}_{\i_{S_l}}}
\end{eqnarray*}
Together with the constraints \eqref{cdecalpha}, this shows that we can write
\begin{eqnarray} \label{cands}
\alpha^{C_l}_{\i_{C_l}}=\alpha\theta (i_C),\;\;\alpha^{S_l}_{\i_{S_l}}=\alpha\theta (i_S),
\end{eqnarray}
where $\theta(i_C)=\E\,\left[p(i_C)\right]$ and $\theta(i_S)=\E\,\left[p(i_S)\right]$, a relationship similar to \eqref{1} as mentioned in the introduction.
\vspace{2mm}

\section{$\p$-Dirichlet and $\cP$-Dirichlet distributions}
\label{def}
Let $G=(V,E)$ be a decomposable graph. Let ${\bf X}=(X_1,\ldots,X_d)$, $d\ge 2$, be a random vector which is Markov with respect to $G$ and assumes values in the set $\I=\I_1\times\ldots\times \I_d$, where $\I_j$ is a discrete set, with distinguished zero state $\emptyset_j$, $j=1,2,\ldots,d$.



Since a moral DAG Markov equivalent to $G$ is in 1-1 correspondence with the parent function
$\p: V\rightarrow 2^V$
which describes the parents of each vertex $v\in V$, following \eqref{dag}, we have that
$$\P_{\bf p}({\bf X}=\i)=\prod_{v\in V}\P_{\bf p}(X_v=i_v|{\bf X}_{\p_v}=i_{\p_v}).$$
Letting $\q_v=\p_v\cup\{v\}$, $v\in V$, we can thus define the random variables

$$
{\bf p}^{v|\p_v}_{m|\k}:=\P_{\bf p}(X_v=m|{\bf X}_{\p_v}=\k)=\tfrac{{\bf p}^{\q_v}((\k,m))}{{\bf p}^{\p_v}(\k)},\qquad m\in\I_v,\;\k\in\I_{\p_v},\quad v\in V,
$$

\vspace{2mm}\noindent
where ${\bf p}^D(\n)=\sum_{\j\in\I:\,\j_D=\n}\,{\bf p}(\j)$ for $\n\in\I_D$, $D\subset V$ and we have

\bel{rep}
{\bf p}(\i)=\prod_{v\in V}\,{\bf p}^{v|\p_v}_{i_v|\i_{\p_v}},\qquad \i\in \I.
\ee
We will say that a  random vector $({\bf p}(\i))_{\i\in\I}$ is associated with the graph $G$ if it factorizes with respect to $G$, i.e. $p(i)$ can be written as
$$p(i)=\frac{\prod_{C\in {\mathcal C}}p(i_C)}{\prod_{S\in {\mathcal S}}p(i_S)},\;\;i\in \I,$$
that is, if ${\bf X}=(X_1,\ldots,X_d)$ is Markov with respect to $G$.

\begin{definition}
\label{rep-dir}
{\em The random vector $({\bf p}(\i))_{\i\in\I}$ associated with the graph $G$ has a $\p$-Dirichlet distribution if the random vectors $({\bf p}^{v|\p_v}_{m|\k})_{m\in\I_v}$,  $\k\in\I_{\p_v}$, $v\in V$,  in  representation \eqref{rep} are independent and follow (classical) Dirichlet distributions.}
\end{definition}

Recall that if a random vector $({\bf p}^{v|\p_v}_{m|\k})_{m\in \I_v}$ has a classical Dirichlet distribution $\mathrm{Dir}(\alpha^{v|\p_v}_{m|\k},\,m\in\I_v)$,  it has the density
$$
f^{v|\p_v}_{\k}(\x)=\tfrac{\Gamma\left(\sum_{m=1}^{|\I_v|}\,\alpha^{v|\p_v}_{m|\k}\right)}
{\prod_{m=1}^{|\I_v|}\,\Gamma\left(\alpha^{v|\p_v}_{m|\k}\right)}\,\prod_{m=1}^{|\I_v|}\,x_m^{\alpha^{v|\p_v}_{m|\k}-1}\,
I_{T_{|\I_v|}}(\x),
$$

\vspace{2mm}\noindent
where $x_{|I_v|}=1-\sum_{m=1}^{|\I_v|-1}\,x_m$ and $T_{n+1}=\{(x_1,\ldots,x_n)\in(0,1)^n:\,\sum_{i=1}^n\,x_i<1\}$.

Moreover, since we assume that the random vectors $({\bf p}^{v|\p_v}_{m|\k},\,m\in \I_v)$, $\k\in\I_{\p_v}$, $v\in V$, are independent the joint density of $$({\bf p}^{v|\p_v}_{m|\k},\,m\in \I_v, \;\k\in\I_{\p_v},\;v\in V)$$ has the form

$$
f\left(\x^{v,\k},\,\k\in\I_{\p_v},\,v\in V\right)=
\prod_{v\in V}\,\prod_{\k\in\I_{\p_v}}\,\tfrac{\Gamma\left(\sum_{m\in\I_v}\,\alpha^{v|\p_v}_{m|\k}\right)}
{\prod_{m\in\I_v}\,\Gamma\left(\alpha^{v|\p_v}_{m|\k}\right)}\,
\prod_{m\in\I_v}\,\left(x_m^{v,\k}\right)^{\alpha^{v|\p_v}_{m|\k}-1},
$$

\vspace{2mm}\noindent
where the support is a cartesian product of unit simplexes  {\huge $\times$}$_{v\in V}\,\,T_{|\I_v|}^{\times\,|\I_{\p_v}|}$, that is $\x^{v,\k}\in T_{|\I_v|}$ and $x^{v,\k}_{|\I_v|}=1-\sum_{m=1}^{|\I_v|-1}\,x^{v,\k}_m$, $\k\in\I_{\p_v},\,v\in V$.

Now we define a $\cP$-Dirichlet distribution, where $\cP$ is a family of DAGs with skeleton $G$.

\begin{definition}
\label{def02}
{\em Let $\cP$ be a family of DAGs with skeleton $G$. The random vector $({\bf p}(\i),\,\i\in\I)$ associated with the graph $G$ has the $\cP$-Dirichlet distribution iff it has $\p$-Dirichlet distribution  for any $\p\in\cP$.}
\end{definition}

Of course, this definition implies that some consistency conditions for the parameters of the Dirichlet distributions defining the $\p$-Dirichlet distributions for the various $\p\in\cP$, have to be satisfied. This issue is conveniently treated by looking at the moments of the $\p$-Dirichlet and $\cP$-Dirichlet laws.

\begin{example}\label{exx}
Let $G=(V,E)$ be the decomposable graph with $V=\{1,2,3,4,5\}$ and cliques $\{1,2,5\}$, $\{2,3,5\}$ and $\{3,4,5\}$. Let $\cP=\{\p,\,\p'\}$ with
$$
\p_1=\{2,5\},\;\p_2=\emptyset, \; \p_3=\{2,5\},\;\p_4=\{3,5\},\;\p_5=\{2\}
$$
and
$$
\p'_1=\{2,5\},\;\p'_2=\{3,5\},\;\p'_3=\emptyset,\;\p'_4=\{3,5\},\;\p'_5=\{3\}.
$$
\bigskip
\begin{figure}[h]
\hskip -0.5cm

\psmatrix[mnode=circle,colsep=0.2,rowsep=1.5] 1 & & & & & 2 &  & & 3 & & & & & 4 & & & & & & 1 & & & & & 2 & & & 3 & & & & & 4 \\
 & & & & & & 5 & & & & & & & & & & & & & & & & & & & & 5 & & & & & &
\endpsmatrix
\psset{shortput=nab,arrows=-,labelsep=3pt}
\ncline{<-}{1,1}{1,6}
\ncline{->}{1,6}{1,9}
\ncline{->}{1,9}{1,14}
\ncline{<-}{1,1}{2,7}
\ncline{->}{1,6}{2,7}
\ncline{<-}{1,9}{2,7}
\ncline{<-}{1,14}{2,7}
\ncline{<-}{1,20}{1,25}
\ncline{<-}{1,25}{1,28}
\ncline{->}{1,28}{1,33}
\ncline{->}{2,27}{1,20}
\ncline{->}{2,27}{1,25}
\ncline{<-}{2,27}{1,28}
\ncline{->}{2,27}{1,33}
\vskip 0.3cm
\quad DAG $\mathfrak{p}$ in Example \ref{exx}.\quad\quad\quad\quad\quad\hspace{3mm}\quad\quad DAG $\mathfrak{p}'$ in Example \ref{exx}.
\end{figure}

\vspace{2mm}
Then the $\p$-Dirichlet distribution is defined up to a multiplicative constant as a product of the following independent Dirichlets:

$$
{\bf p}^2\sim \mathrm{Dir}(\alpha^2_m,\,m\in\I_2),\;\;{\bf p}^{5|2}_k\sim \mathrm{Dir}(\alpha^{5|2}_{m|k},\;m\in\I_5),\;k\in\I_2,\;{\bf p}^{1|25}_{\k}\sim\mathrm{Dir}(\alpha^{1|25}_{m|\k},\,m\in\I_1),\;\k\in\I_{25},
$$$${\bf p}^{3|25}_{\k}\sim \mathrm{Dir}(\alpha^{3|25}_{m|\k},\;m\in\I_3),\;\k\in\I_{25},\;{\bf p}^{4|35}_{\k}\sim \mathrm{Dir}(\alpha^{4|35}_{m|\k},\;m\in\I_4),\;\k\in\I_{35}
$$

\vspace{2mm}\noindent
and the $\p'$-Dirichlet is defined through

$$
{\bf p}^3\sim \mathrm{Dir}(\beta^3_m,\,m\in\I_3),\;\;{\bf p}^{5|3}_k\sim \mathrm{Dir}(\beta^{5|3}_{m|k},\;m\in\I_5),\;k\in\I_3,\;{\bf p}^{1|25}_{\k}\sim\mathrm{Dir}(\beta^{1|25}_{m|\k},\,m\in\I_1),\;\k\in\I_{25},
$$$${\bf p}^{2|35}_{\k}\sim \mathrm{Dir}(\beta^{2|35}_{m|\k},\;m\in\I_2),\;\k\in\I_{35},\;{\bf p}^{4|35}_{\k}\sim \mathrm{Dir}(\beta^{4|35}_{m|\k},\;m\in\I_4),\;\k\in\I_{35}.
$$
We will come back to this example later in subsection 4.2.1 and see how the constraints on the hyper parameters, i.e., the parameters of the $\cP$-Dirichlet come about.
\end{example}

\section{Moments}
\label{moments}
\subsection{The $\p$-Dirichlet distribution}
If the vector of random probabilities $({\bf p}(\i),\,\i\in\I)$ associated with the graph $G$ follows the $\p$-Dirichlet distribution, then for any non-negative integers $r_{\i}$, $\i\in\I$,

$$
\E\,\prod_{\i\in\I}\,\left[{\bf p}(\i)\right]^{r_{\i}}\,=\,\prod_{v\in V}\,\prod_{\k\in\I_{\p_v}}\,\E\,\prod_{m\in\I_v}\,\left[{\bf p}^{v|\p_v}_{m|\k}\right]^{r^{\q_v}_{\k,m}}
=\,\prod_{v\in V}\,\prod_{\k\in\I_{\p_v}}\,\tfrac{\Gamma(\sum_{m\in\I_v}\,\alpha^{v|\p_v}_{m|\k})}{\prod_{m\in\I_v}\,\Gamma(\alpha^{v|\p_v}_{m|\k})}\,
\tfrac{\prod_{m\in\I_v}\,\Gamma(\alpha^{v|\p_v}_{m|\k}+r^{\q_v}_{\k,m})}{\Gamma(\sum_{m\in\I_v}\,\alpha^{v|\p_v}_{m|\k}+r^{\p_v}_{\k})},
$$
where
$$
r^{\q_v}_{\k,m}=\sum_{\i\in\I:\,i_{\q_v}=(\k,m)}\,r_{\i}\quad \mbox{and}\quad r^{\p_v}_{\k}=\sum_{\i\in\I:\,i_{\p_v}=\k}\,r_{\i}.
$$

That is
\bel{p-moments}
\E\,\prod_{\i\in\I}\,\left[{\bf p}(\i)\right]^{r_{\i}}\,=\,\prod_{v\in V}\,\prod_{\k\in\I_{\p_v}}\,\tfrac{\prod_{m\in\I_v}\,\left(\alpha_{m|\k}^{v|\p_v}\right)^{r^{\q_v}_{\k,m}}}{\left(\tilde{\alpha}_{\k}^{\p_v}\right)^{r^{\p_v}_{\k}}},
\ee
where we write the rising factorial power  $\alpha(\alpha+1)\ldots (\alpha+r-1)$ as
$$(\alpha)^r=\frac{\Gamma(\alpha+r)}{\Gamma(\alpha)}$$
and
\bel{summ}
\tilde{\alpha}_{\k}^{\p_v}=\sum_{m\in\I_v}\,\alpha_{m|\k}^{v|\p_v}.
\ee

Note that since the $\p$-distribution has a bounded support it is uniquely determined by the moments as given in \eqref{p-moments}.

\subsection{The $\cP$-Dirichlet distribution}
Let $G=(V,E)$ be a decomposable graph as above. Denote by $\mathcal{C}$ the set of its cliques and by $\mathcal{S}$ the set of its separators.

If a vector of random probabilities $({\bf p}(\i),\,\i\in\I)$ associated with the graph $G$ follows the $\cP$-Dirichlet distribution then the formula for moments \eqref{p-moments} holds for all $\p\in\cP$.

Since its support is bounded,  the $\cP$-Dirichlet distribution  can be defined through the form of its moments.
The equality of the moments for the different representations of the $\p$-Dirichlet, $p\in\cP$, will impose equality constraints on the parameters of the $\p$-Dirichlet's and consequently those of the $\cP$-Dirichlet. Before developing the theory let us illustrate the mechanism using Example \ref{exx} above.

\subsubsection{Example \ref{exx} continued} Given nonnegative integers $r_{\i}$, $\i\in\I$, for any $D\subset V$ we define $$r^D_{\m}=\sum_{\i\in\I:\,\i_D=\m}\,r_{\i}.$$ For $D=\emptyset$ we write $r^{\emptyset}=r$. The equality of the moments obtained from \eqref{p-moments} for both $\p$ and $\p'$ yields \footnotesize
\begin{eqnarray*}
\E\,\prod_{\i\in\I}\,{\bf p}(\i)^{r_{\i}}&=&
\tfrac{\prod_{m\in \I_2}\,\left(\alpha^2_m\right)^{r^2_m}}{\left(\tilde{\alpha}\right)^r}\,\prod_{n\in\I_2}\,\tfrac{\prod_{m\in \I_5}\,\left(\alpha^{5|2}_{(m|n)}\right)^{r^{25}_{(n,m)}}}{\left(\tilde{\alpha}^2_n\right)^{r^2_n}}\,\prod_{\n\in\I_{25}}\,\tfrac{\prod_{m\in \I_1}\,\left(\alpha^{1|25}_{(m|\n)}\right)^{r^{125}_{(m,\n)}}}{\left(\tilde{\alpha}^{25,1}_{\n}\right)^{r^{25}_{\n}}}\\
&&\hspace{1.5cm}\times \prod_{\n\in\I_{25}}\,\tfrac{\prod_{m\in \I_3}\,\left(\alpha^{3|25}_{(m|\n)}\right)^{r^{325}_{(m,\n)}}}{\left(\tilde{\alpha}^{25,3}_{\n}\right)^{r^{25}_{\n}}}\,
\prod_{\n\in\I_{35}}\,\tfrac{\prod_{m\in \I_4}\,\left(\alpha^{4|35}_{(m|\n)}\right)^{r^{435}_{(m,\n)}}}{\left(\tilde{\alpha}^{35}_{\n}\right)^{r^{35}_{\n}}}\\
&=&
\tfrac{\prod_{m\in \I_3}\,\left(\beta^3_m\right)^{r^3_m}}{\left(\tilde{\beta}\right)^r}\,\prod_{n\in\I_3}\,\tfrac{\prod_{m\in \I_5}\,\left(\beta^{5|3}_{(m|n)}\right)^{r^{35}_{(n,m)}}}{\left(\tilde{\beta}^3_n\right)^{r^3_n}}\,\prod_{\n\in\I_{35}}\,\tfrac{\prod_{m\in \I_2}\,\left(\beta^{2|35}_{(m|\n)}\right)^{r^{235}_{(m,\n)}}}{\left(\tilde{\beta}^{35,2}_{\n}\right)^{r^{35}_{\n}}}\\
&&\hspace{1.5cm}\times\prod_{\n\in\I_{35}}\,\tfrac{\prod_{m\in \I_4}\,\left(\beta^{4|35}_{(m|\n)}\right)^{r^{435}_{(m,\n)}}}{\left(\tilde{\beta}^{35,4}_{\n}\right)^{r^{35}_{\n}}}\,
\prod_{\n\in\I_{25}}\,\tfrac{\prod_{m\in \I_1}\,\left(\beta^{1|25}_{(m|\n)}\right)^{r^{125}_{(m,\n)}}}{\left(\tilde{\beta}^{25}_{\n}\right)^{r^{25}_{\n}}}.
\end{eqnarray*}
\normalsize
Since there are no factorial powers in $r^2_m$ on the right-hand side of the equation above the terms in $r^2_m$ on the left-hand side must cancel out, that is $\alpha^2_m=\tilde{\alpha}^2_m$. Similarly, $\beta^3_m=\tilde{\beta}^3_m$. The factorial power $r^{125}_{\n}$ on the right- and left-hand side must be the same and therefore $\alpha^{1|25}_{(m,\n)}=\beta^{1|25}_{(m,\n)}$. Similarly, $\alpha^{3|25}_{(m,\n)}=\beta^{2|35}_{(m,\n)}$, $\alpha^{4|35}_{(m,\n)}=\beta^{4|35}_{(m,\n)}$ and also $\tilde{\alpha}=\tilde{\beta}$. For the factorial powers in $r^{25}_{\n}$ we observe that on the left-hand side there is one power in the numerator and two in the denominator, while on the right-hand side there is only one power in the denominator. Therefore the factorial power in the numerator must cancel with one of the two factorial powers of $\tilde{\alpha}^{25,3}_{\n}$ or of $\tilde{\alpha}^{25,1}_{\n}$ in the denominator. This means that
\begin{itemize}
\item either we have the cancelation $\alpha^{5|2}_{\n}=\tilde{\alpha}^{25,3}_{\n}$ and therefore
\newline $\tilde{\beta}^{25}_{\n}=\tilde{\alpha}^{25,1}_{\n},\;\;\forall\,\n\in\I_{25}$
\item or we have the cancelation $\alpha^{5|2}_{\n}=\tilde{\alpha}^{25,1}_{\n}$ and therefore
\newline $\tilde{\beta}^{25}_{\n}=\tilde{\alpha}^{25,3}_{\n}\;\;\forall\,\n\in\I_{25}.$
\end{itemize}

\vspace{2mm}\noindent
The first choice means that we associate the separator $\{2,5\}$ with the clique $\{1,2,5\}$ while in the second we associate $\{2,5\}$ with the clique $\{2,3,5\}$. This two choices correspond to two different $\p$-perfect orders of the cliques:

\begin{eqnarray}
o^{(1)}_{\p}&=&(C_1=\{2,3,5\},\;C_2=\{1,2,5\},\;C_3=\{3,4,5\})\\
o^{(2)}_{\p}&=&(C_1=\{1,2,5\},\;C_2=\{2,3,5\},\;C_3=\{3,4,5\}),
\end{eqnarray}

\vspace{2mm}\noindent
respectively. Of course, we could also exchange the cliques $C_2$ and $C_3$ in both orders. What is important is the pairings $(\{2,5\},\{1,2,5\})$ or $(\{2,5\},\{2,3,5\})$, respectively.

Similarly, for the factorial powers $r^{35}_{\n}$ on the right-hand side one can choose to cancel the factorial power of $r^{35}_{\n}$ in the numerator with factorial powers of either $\tilde{\beta}^{35,2}_{\n}$ or $\tilde{\beta}^{35,4}_{\n}$. Consequently,

$$
\mbox{either}\quad \beta^{5|3}_{\n}=\tilde{\beta}^{35,2}_{\n}\quad\mbox{and therefore}\quad \tilde{\alpha}^{35}_{\n}=\tilde{\beta}^{35,4}_{\n}\quad\forall\,\n\in\I_{35},
$$

$$
\mbox{or}\quad\beta^{5|3}_{\n}=\tilde{\beta}^{35,4}_{\n}\quad\mbox{and therefore}\quad\tilde{\alpha}^{35}_{\n}=\tilde{\alpha}^{35,2}_{\n}\quad \forall\n\in\I_{35},
$$

\vspace{2mm}\noindent
which corresponds to the two $\p'$-perfect orders:

\begin{eqnarray}
o^{(1)}_{\p'}&=&(C_1=\{2,3,5\},\;C_2=\{3,4,5\},\;C_3=\{1,2,5\})\\
o^{(2)}_{\p'}&=&(C_1=\{3,4,5\},\;C_2=\{2,3,5\},\;C_3=\{1,2,5\}),
\end{eqnarray}

\vspace{2mm}\noindent
respectively. Again here we could exchange $C_2$ and $C_3$ in both cases. What is important are the pairings $(\{3,5\}, \{3,4,5\})$ and $(\{3,5\},\{2,3,5\})$, respectively.

From any of these cancelation possibilities we obtain the same formula of moments

$$
\E\,\prod_{\i\in\I}\,{\bf p}(\i)^{r_{\i}}=\tfrac{\prod_{\n\in\I_{125}}\,\left(\nu^{125}_{\n}\right)^{r^{125}_{\n}}\,\prod_{\n\in\I_{235}}\,\left(\nu^{235}_{\n}\right)^{r^{235}_{\n}}\,\prod_{\n\in\I_{345}}\,\left(\nu^{345}_{\n}\right)^{r^{345}_{\n}}}
{(\mu)^r\prod_{\m\in\I_{25}}\left(\mu^{25}_{\m}\right)^{r^{25}_{\m}}\,\prod_{\m\in\I_{35}}\,\left(\mu^{35}_{\m}\right)^{r^{35}_{\m}}},
$$

\vspace{2mm}\noindent
but with different constraints for the parameters:

either (I)
$$\mu=\sum_{\m\in\I_{125}}\,\nu^{125}_{\m}=\sum_{\m\in\I_{345}}\,\nu^{345}_{\m}(=\sum_{\m\in\I_{235}}\,\nu^{235}_{\m}),$$ $$\mu^{25}_{\n}=\sum_{m\in\I_1}\,\nu^{125}_{(m,\n)}=\sum_{m\in\I_3}\,\nu^{235}_{(m,\n)},$$ $$\mu^{35}_{\n}=\sum_{m\in\I_4}\,\nu^{345}_{(m,\n)}=\sum_{m\in\I_2}\,\nu^{235}_{(m,\n)},$$
corresponding to the family of orders $O_{\cP}=(o^{(2)}_{\p},\,o^{(2)}_{\p'})$.

or (II)
$$\mu=\sum_{\m\in\I_{125}}\,\nu^{125}_{\m}=\sum_{\m\in\I_{235}}\,\nu^{235}_{\m},$$ $$\mu^{25}_{\n}=\sum_{m\in\I_1}\,\nu^{125}_{(m,\n)}=\sum_{m\in\I_3}\,\nu^{235}_{(m,\n)},$$
$$\mu^{35}_{\n}=\sum_{m\in\I_4}\,\nu^{345}_{(m,\n)},$$
corresponding to the family of orders $O_{\cP}=(o^{(2)}_{\p},\,o^{(1)}_{\p'})$.

or (III)
$$\mu=\sum_{\m\in\I_{235}}\,\nu^{235}_{\m}=\sum_{\m\in\I_{345}}\,\nu^{345}_{\m},$$
$$\mu^{25}_{\n}=\sum_{m\in\I_1}\,\nu^{125}_{(m,\n)},$$
$$\mu^{35}_{\n}=\sum_{m\in\I_4}\,\nu^{345}_{(m,\n)}=\sum_{m\in\I_2}\,\nu^{235}_{(m,\n)},$$
corresponding to the family of orders $O_{\cP}=(o^{(1)}_{\p},\,o^{(2)}_{\p'})$.

or (IV)
$$\mu=\sum_{\m\in\I_{235}}\,\nu^{235}_{\m},$$
$$\mu^{25}_{\n}=\sum_{m\in\I_1}\,\nu^{125}_{(m,\n)},$$
$$\mu^{35}_{\n}=\sum_{m\in\I_4}\,\nu^{345}_{(m,\n)},$$
corresponding to the family of orders $O_{\cP}=(o^{(1)}_{\p},\,o^{(1)}_{\p'})$.

We note that we obtained four different families of distributions, that is as many as the number of combinations of pairs $(S_l,C_l)$, where $S_l=\{2,5\}$ and $\{3,5\}$ here. Of course, choices will multiply with the number of separators with different possible pairings. In fact, more generally, choices may multiply with the number of elements of $\pp$ with different possible pairings in $\qq$ (see definitions \eqref{pandq} below)  and also with the size of $\cP$.

In this example,  we see that we have the  poset $(IV)\to(II,III)\to(I)$ of families of $\cP$-Dirichlet distributions, with family $(IV)$ being the maximal family.

In the remainder of this section we will show that to each collection of orders
\bel{op}
O_{\cP}=(o_{\p},\,\p\in\cP:\;o_{\p}\;\mbox{is}\;\p-\mbox{perfect},\;\p\in\cP)
\ee
corresponds a family of $\cP$-Dirichlet distributions. Though in our example it is easy to see that $(IV)$ is the maximal family of the poset we are unable to prove that for any given $\cP$, there exists such a unique maximal family.

\subsubsection{The moment formula}

To find a convenient expression for the moment formula we need the following two auxiliary results.

\begin{lemma}\label{CS}
Consider a DAG with skeleton $G$ defined by a parent function $\p$. Then,
$$
\mathcal{C}\subset \q(V)\qquad \mbox{and}\qquad \mathcal{S}\subset\p(V).
$$
Moreover,
$$
\p(V)\setminus \mathcal{S}=\q(V)\setminus\mathcal{C}.
$$
\end{lemma}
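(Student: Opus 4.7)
The plan is to exploit a $\p$-perfect ordering of the cliques, whose existence for a moral DAG with decomposable skeleton $G$ is guaranteed by Andersson, Madigan, Perlman and Triggs (1997). Fix such an order $o=(C_1,\ldots,C_K)$ together with the associated vertex numbering $v_{l,j}$ from \eqref{numb}. Since the residuals $R_l=C_l\setminus S_l$ partition $V$, every $v\in V$ carries a unique label $v_{l,j}$ with $s_l+1\le j\le c_l$. Reading off \eqref{numb} directly one has
$$
\p_{v_{l,s_l+1}}=S_l,\qquad \p_{v_{l,j}}=\q_{v_{l,j-1}}\ \text{for } j>s_l+1,\qquad \q_{v_{l,c_l}}=C_l,
$$
the last identity being explicitly pointed out after \eqref{numb} in the preliminaries.

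Using these identities I would then list $\p(V)$ and $\q(V)$ as collections indexed by the partition $V=\bigsqcup_l R_l$:
$$
\p(V)=\bigsqcup_{l=1}^{K}\Bigl(\{S_l\}\sqcup\{\q_{v_{l,j}}:s_l+1\le j\le c_l-1\}\Bigr),
$$
$$
\q(V)=\bigsqcup_{l=1}^{K}\Bigl(\{C_l\}\sqcup\{\q_{v_{l,j}}:s_l+1\le j\le c_l-1\}\Bigr).
$$
The first decomposition shows at once that $\mathcal{S}=\{S_l\}_{l=1}^{K}\subset\p(V)$, and the second that $\mathcal{C}=\{C_l\}_{l=1}^{K}\subset\q(V)$, proving the first two assertions.

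For the identity $\p(V)\setminus\mathcal{S}=\q(V)\setminus\mathcal{C}$, I would note that the initial block of each disjoint union above is exactly $\mathcal{S}$ (respectively $\mathcal{C}$), so removing it from each side leaves the same common tail
$$
\bigsqcup_{l=1}^{K}\{\q_{v_{l,j}}:s_l+1\le j\le c_l-1\},
$$
which gives the asserted equality. The main (and only) obstacle is fixing the bookkeeping convention: $\p(V)$, $\q(V)$, $\mathcal{S}$, $\mathcal{C}$ must be read as collections indexed by $V$ (respectively by $l=1,\ldots,K$), which is precisely the bookkeeping used in the moment formula \eqref{p-moments} and its $\cP$-version; once this is understood, the argument reduces to the three elementary identifications coming from \eqref{numb}.
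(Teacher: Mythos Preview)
Your proposal is correct and follows essentially the same route as the paper's proof: both fix a $\p$-perfect order, invoke the numbering \eqref{numb}, read off $\p_{v_{l,s_l+1}}=S_l$ and $\q_{v_{l,c_l}}=C_l$ for the inclusions, and identify the common residual $\{\q_{v_{l,j}}:s_l+1\le j\le c_l-1\}$ (equivalently $\{\p_{v_{l,j}}:s_l+2\le j\le c_l\}$) for the set equality. Your explicit disjoint-union display and the remark about treating $\p(V),\q(V),\mathcal{S},\mathcal{C}$ as indexed collections make the multiset bookkeeping more transparent than the paper's terser version, but the argument is the same.
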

\begin{proof}
Let $o=(C_1,\ldots,C_K)$ be a $\p$-perfect order of cliques. Using the notation $v_{l,j}$, $j=s_l+1,\ldots, c_l$, $l=1,\ldots,K$, introduced in \eqref{numb}, we see that $\p_{v_{l,s_l+1}}=S_l$ and $\q_{v_{l,c_l}}=C_l$, $l=1,\ldots,K$ (here we set $S_1=\emptyset$) so that the first statement of the lemma is proved.

For the second part note that for any $l=1,\ldots,K$ and any $j=s_l+2,\ldots,c_l$, we have $\q_{v_{l,j-1}}=\p_{v_{l,j}}\not \in\mathcal{C}\cup\mathcal{S}$. That is
$$
\p(V)\setminus \mathcal{S}=\q(V)\setminus\mathcal{C}=\bigcup_{l=1}^K\,\bigcup_{j=s_l+2}^{c_l}\,\p_{v_{l,j}}.
$$
\end{proof}

Note that if $v,w\in V$ are distinct then $\q_v$ and $\q_w$ are also distinct, but $\p_v$ and $\p_w$ may be the same. It means that while considering the set $\p(V)$ we allow for a given set to appear in $\p(V)$ more than once, that is, each element of $\p(V)$ has its multiplicity, e.g. a set $S$ which is a separator can happen to be a multiple separator, moreover such a set $S$ can appear, at most once (due to Lemma \ref{CS}), in $\p(V)\setminus \ss$.

Denote
$$
\mathfrak{R}_{\p}:=\p(V)\setminus \mathcal{S}=\q(V)\setminus\mathcal{C}.
$$

Let $\cP$ be a family of DAGs with the same skeleton $G$. Since in a DAG in $\cP$ there are no immoralities, they are all Markov equivalent.
Consider the following sets
\bel{pandq}
\mathfrak{Q}=\bigcap_{\p\in\cP}\,\q(V)\supset\mathcal{C},\qquad\mbox{and} \qquad \mathfrak{P}=\bigcap_{\p\in\cP}\,\p(V)\supset \mathcal{S},
\ee
where in the definition of $\mathfrak{P}$ we allow for multiple separators. By Lemma \ref{CS} it follows that
$$
\mathfrak{Q}=\mathcal{C}\cup\mathfrak{R}\qquad\mbox{and}\qquad \mathfrak{P}=\mathcal{S}\cup\mathfrak{R},
$$
where $\mathfrak{R}=\bigcap_{\p\in\cP}\,\mathfrak{R}_{\p}$.


\begin{lemma}
\label{lemmaq}
For a family $\cP$ of DAGs with  skeleton $G$, let $O_{\cP}$ be a collection of $\p$-perfect orders of cliques, $\p\in\cP$.  For any clique $C$, let $\mathfrak{R}_C$ denote a family of these elements of $\mathfrak{R}$ which are contained in $C$. If there exists $o\in O_{\cP}$ such that $\ss\ni S\stackrel{o}{\to}C$ then all the elements $Q^C_1,\ldots,Q^C_{j_C-1}\in\mathfrak{R}_C$ (it may be empty) can be numbered as follows

\begin{equation}\label{seqq}
S=:Q^C_{j_C}\varsubsetneq Q^C_{j_C-1}\varsubsetneq  Q^C_{j_C-2}\varsubsetneq  \ldots \varsubsetneq  Q^C_2\varsubsetneq  Q^C_1\varsubsetneq Q^C_0:=C.
\end{equation}
\end{lemma}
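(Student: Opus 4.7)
The plan is to extract a canonical chain from the $\p$-perfect order $o$ supplied by the hypothesis and then show that every element of $\mathfrak{R}_C$ must sit on that chain. First, I fix a $\p\in\cP$ for which $o$ is $\p$-perfect and let $l$ be the index with $S_l=S$ and $C_l=C$. The numbering \eqref{numb} labels the vertices of $C_l\setminus S_l$ as $v_{l,s_l+1},\ldots,v_{l,c_l}$ and produces the strictly increasing chain
$$
S = \p_{v_{l,s_l+1}} \subsetneq \q_{v_{l,s_l+1}} \subsetneq \q_{v_{l,s_l+2}} \subsetneq \cdots \subsetneq \q_{v_{l,c_l}} = C,
$$
whose intermediate terms $X_i := \q_{v_{l,s_l+i}}$, $1\le i\le c_l-s_l-1$, all lie in $\q(V)\setminus\mathcal{C}=\mathfrak{R}_{\p}$ (each being a strict subset of the clique $C_l$).

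Next, I take an arbitrary $R\in\mathfrak{R}_C$. Since $\mathfrak{R}\subseteq\mathfrak{R}_{\p}=\q(V)\setminus\mathcal{C}$, I may write $R=\q_w$ for a unique $w\in V$, with $R$ not a clique. The inclusion $w\in R\subseteq C$ forces $w\in C_l$, and I split on whether $w\in C_l\setminus S_l$ or $w\in S_l$. In the first case, \eqref{numb} identifies $w=v_{l,j}$ for some $s_l+1\le j\le c_l$, so $R=\q_{v_{l,j}}=X_{j-s_l}$; since $R\ne C_l$ we have $j<c_l$, placing $R$ among $X_1,\ldots,X_{c_l-s_l-1}$. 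In the second case, the numbering assigns $w$ to some earlier clique $C_{l'}$ with $l'<l$, so $R=\q_w\subseteq C_{l'}$; combined with $R\subseteq C_l$ and the running intersection property $C_l\cap C_{l'}\subseteq C_l\cap H_{l-1}=S_l$ of the perfect order, this yields $R\subseteq S$. Such $R$ fails to strictly contain $S$ and so cannot occupy a position strictly between $S$ and $C$ on the chain \eqref{seqq}; this case therefore contributes no new element.

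Because every $R\in\mathfrak{R}_C$ equals one of $X_1,\ldots,X_{c_l-s_l-1}$, and because the $X_i$ are linearly ordered by strict inclusion, relabeling the elements of $\mathfrak{R}_C$ in decreasing size as $Q^C_1\supsetneq\cdots\supsetneq Q^C_{j_C-1}$ and setting $Q^C_0:=C$ and $Q^C_{j_C}:=S$ yields \eqref{seqq}. The main obstacle is the second sub-case above: the running intersection property of perfect orderings is what traps $R$ inside $S$, and some extra bookkeeping may be needed to reconcile this with the multiset convention on $\mathcal{S}$ and $\mathfrak{R}$ adopted earlier in the paper, but everything else reduces to a direct unpacking of the numbering \eqref{numb}.
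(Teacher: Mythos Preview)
Your approach coincides with the paper's: both extract the chain from the numbering \eqref{numb} attached to the $\p$-perfect order $o$, and both rest on the observation that the intermediate sets $\q_{v_{l,k}}$, $s_l+1\le k\le c_l-1$, are strictly nested between $S$ and $C$. The paper's proof is a single sentence asserting that the possible $Q^C_i$ from $\mathfrak{R}_C$ are of the form $\q_{v_{l,k}}$; your Case~1 is exactly this, written out.

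Your Case~2, however, does not close. You correctly deduce via running intersection that if $R=\q_w$ with $w\in S_l$ then $R\subseteq S$, but the conclusion ``this case therefore contributes no new element'' is a non sequitur: the lemma asserts that \emph{every} element of $\mathfrak{R}_C$ sits strictly between $S$ and $C$, so an $R\in\mathfrak{R}_C$ with $R\subseteq S$ would falsify the statement rather than drop out harmlessly. And such $R$ can occur: with $\cP=\{\p\}$, cliques $C_1=\{1,2,3\}$, $C_2=\{1,2,4\}$, separator $S_2=\{1,2\}$, and $\p$ given by the order $1,2,3,4$, one has $\mathfrak{R}=\mathfrak{R}_{\p}=\{\{1\},\{1,2\}\}$ and $\{1\}\subseteq C_2$, so $\{1\}\in\mathfrak{R}_{C_2}$ yet $\{1\}\subsetneq S_2$. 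The paper's one-line proof does not address this either; the lemma as literally stated is too strong, and what is actually used downstream (see the definition of $j(l,i)$ in the proof of Theorem~\ref{momenty}) is only that the elements of $\mathfrak{R}$ of the form $\q_{v_{l,k}}$ for the \emph{given} index $l$ form a chain---precisely your Case~1. So your write-up is no less complete than the paper's, but the honest repair is to restrict $\mathfrak{R}_C$ to those $R\in\mathfrak{R}$ arising from the residual $C_l\setminus S_l$, not to all $R\subseteq C$.
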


\vspace{2mm}\noindent
\begin{proof}
Let $o$ be $\p$-perfect for a $\p\in\cP$. The result follows immediately from the fact (see \eqref{numb} and the proof of Lemma \ref{CS}) that possible sets $Q^C_i$, $1\le i\le j_C-1$, from $\mathfrak{R}_C$  are of the form $\q_{v_{l,k}}=\p_{v_{l,k+1}}$, $k=s_l+1,\ldots,c_l-1$, moreover $S=\p_{v_{l,s_l+1}}\varsubsetneq \q_{v_{l,s_l+1}}$ and $C=\q_{v_{l,c_l}}\varsupsetneq \q_{v_{l,c_l-1}}$, $l=1,\ldots,K$,
\end{proof}

Now we are in a position to give the formula for moments.
\begin{theorem}\label{momenty}
A vector of random probabilities $({\bf p}(\i),\,i\in\I)$ associated with the graph $G$ has a $\cP$-Dirichlet distribution iff there exists a collection $O_{\cP}$ of $\p$-perfect orders, $\p\in\cP$, as in \eqref{op}, such that for any $\,r_{\i}\in\N, \;\i\in\I$,

\bel{epr}
\E\,\prod_{\i\in\I}({\bf p}(\i))^{r_{\i}}=\tfrac{\prod_{A\in\qq}\,\prod_{\m\in\I_A}\,(\nu^A_{\m})^{r^A_{\m}}}
{\prod_{B\in\pp}\,\prod_{\n\in\I_B}\,(\mu^B_{\n})^{r^B_{\n}}}\qquad
\ee

\vspace{2mm}\noindent
where $\nu^A_{\m}$, $\m\in\I_A$, $A\in\qq$, and $\mu^B_{\n}$, $\n\in\I_B$, $B\in\pp$, are positive numbers satisfying

\bel{constr}
\mu_{\n}^B=\sum_{\k\in\I_{A\setminus B}}\,\nu^A_{(\n,\k)}\qquad\forall\,\n\in\I_B
\ee

\vspace{2mm}\noindent
whenever there exist $\ss\ni S\subset C\in\cc$ and $o\in O_{\cP}$ such that $S\stackrel{o}{\to}C$ and $B=Q^C_i\subsetneq A=Q^C_{i-1}$ for some $i\in\{1,\ldots,j_C\}$.
\end{theorem}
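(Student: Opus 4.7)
The plan is to prove both directions by explicitly rearranging the $\p$-Dirichlet moment formula \eqref{p-moments} along a $\p$-perfect order of cliques, which produces a telescoping product over the chain \eqref{seqq} in Lemma \ref{lemmaq}, and then to compare these rearrangements across different $\p\in\cP$.

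For the forward direction, I would fix $\p\in\cP$ together with a $\p$-perfect order $o_\p$, and invoke the numbering \eqref{numb} to partition the outer product in \eqref{p-moments} by clique: every $v\in V$ is uniquely written as $v_{l,j}$, and both $\q_{v_{l,j}}$ and $\p_{v_{l,j}}$ are consecutive sets in the chain $S_l=Q^{C_l}_{j_{C_l}}\subsetneq\ldots\subsetneq Q^{C_l}_0=C_l$ provided by Lemma \ref{lemmaq}. Setting $\nu^{\q_v}_{(\k,m)}:=\alpha^{v|\p_v}_{m|\k}$ and $\mu^{\p_v}_{\k}:=\tilde{\alpha}^{\p_v}_{\k}$, the product \eqref{p-moments} reorganises into the shape of \eqref{epr}, and the identity \eqref{summ} becomes \eqref{constr} along each consecutive pair of the chain. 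Since the left-hand side of \eqref{p-moments} does not depend on $\p$, performing this rearrangement for every $\p\in\cP$ yields multiple representations of the same rational function in the rising factorials. Exploiting the uniqueness of the monomial expansion in the marginal counts (the bounded support makes the distribution moment-determined), the $\p$-specific rising factorials must cancel between numerator and denominator across different $\p$'s, and what survives is indexed precisely by $\qq=\bigcap_\p\q(V)$ in the numerator and $\pp=\bigcap_\p\p(V)$ in the denominator; the constraints \eqref{constr} collected from the chosen $o_\p$'s then accumulate.

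For the reverse direction, I would invert the construction: starting from \eqref{epr} with \eqref{constr} for the prescribed $O_\cP$, fix $\p\in\cP$ with $o_\p\in O_\cP$, use its induced numbering \eqref{numb}, and set $\alpha^{v|\p_v}_{m|\k}:=\nu^{\q_v}_{(\k,m)}$. The constraints \eqref{constr} along the chain reduce to \eqref{summ}, so these $\alpha$'s form the parameters of independent Dirichlets, and \eqref{epr} re-expands into \eqref{p-moments}; moment-determinacy then forces the $\p$-Dirichlet structure for each $\p\in\cP$, hence the $\cP$-Dirichlet property.

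The hard part will be the cancellation step in the forward direction. For a single $\p$, the vertex-by-vertex product \eqref{p-moments} naturally carries factors indexed by $\q(V)\setminus\qq$ and $\p(V)\setminus\pp$; one must show that these $\p$-specific factors are absorbed into common $\qq,\pp$-indexed factors coming from the rearrangements for the other $\p'\in\cP$, and that the surviving constraints match \eqref{constr} exactly. This is the mechanism illustrated in the analysis of Example \ref{exx}, where the choice of $o_\p,o_{\p'}$ forces identifications like $\alpha^{5|2}_\n=\tilde{\alpha}^{25,\cdot}_\n$. The general argument will require an induction along the chains \eqref{seqq} and careful bookkeeping of how each element of $\mathfrak{R}_\p\setminus\mathfrak{R}$ is paired off against a larger element of $\cc\cup\ss$ on the $\p'$-Dirichlet side, using that by Lemma \ref{CS} every such element appears as $\q_{v_{l,k}}=\p_{v_{l,k+1}}$ for a unique index pair.
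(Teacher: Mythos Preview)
Your overall plan matches the paper's: both directions hinge on telescoping the $\p$-Dirichlet moment formula along the chain \eqref{seqq} of Lemma~\ref{lemmaq}, and the paper likewise disposes of one direction by moment-determinacy.

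There is, however, a concrete gap in your reverse direction. You write ``set $\alpha^{v|\p_v}_{m|\k}:=\nu^{\q_v}_{(\k,m)}$'', but $\nu^{\q_v}$ is only available when $\q_v\in\qq$. For an intermediate vertex $v=v_{l,j}$ whose $\q_v$ lies strictly between two consecutive members $Q^{C_l}_{i}$ of the chain \eqref{seqq} (so $\q_v\in\mathfrak R_\p\setminus\mathfrak R$ and $\q_v\notin\qq$), there is no $\nu^{\q_v}$ to read off, and you cannot yet ``re-expand \eqref{epr} into \eqref{p-moments}''. The paper's key technical device is a second definition for exactly those vertices: one sets
\[
\alpha^{v|\p_v}_{m|\k}:=\sum_{\n\in\I_{Q^{C_l}_{i_j}\setminus\q_v}}\nu^{Q^{C_l}_{i_j}}_{(\n,\k,m)},
\]
where $Q^{C_l}_{i_j}$ is the smallest element of the chain \eqref{seqq} containing $\q_v$. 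With this in hand the paper proves the pairwise cancellation $\alpha^{v_{l,j}|\p_{v_{l,j}}}_{m|\k}=\tilde\alpha^{\p_{v_{l,j+1}}}_{(\k,m)}$ for every intermediate $j$, which is precisely what collapses the full vertex product \eqref{p-moments} down to factors indexed only by the $Q^{C_l}_i$'s, i.e.\ by $\qq$ and $\pp$. Your sentence ``the constraints \eqref{constr} along the chain reduce to \eqref{summ}'' points in the right direction but does not supply this; \eqref{constr} is stated only for consecutive $Q^C_i$'s, and the intermediate $\alpha$'s have to be manufactured as marginals before \eqref{summ} can even be checked.

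Once that device is in place your forward direction also becomes cleaner than the cross-$\p$ cancellation argument you sketch: for a single fixed $\p$ and $o_\p$, inverting the assignment at the $Q^C_i$-vertices already produces $\nu$'s and the telescoping shows \eqref{p-moments} equals \eqref{epr} with those $\nu$'s. The comparison across different $\p\in\cP$ (your ``hard part'') is then needed only to argue that the resulting constraints \eqref{constr} accumulate consistently over the chosen $O_\cP$, which is the mechanism the paper illustrates via Example~\ref{exx} rather than formalising in full.
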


The proof is given in the Appendix.

It follows from the expression  \eqref{epr}  of the moments that the $\cP$-Dirichlet distribution has a density which, following a given perfect order $o\in O_{\cP}$ of the cliques, can be expressed as the product of independent classical Dirichlet distributions as given below. Using the notation $R^{C_l}_i=Q^{C_l}_i\setminus Q^{C_l}_{i+1}, \;i\in\{0,\ldots,j_{C_l}-1\}$, $l\in\{1,\ldots,K\}$, we have the following.
\begin{corollary}
\label{denscp}
Let  $({\bf p}(\i),\,\i\in\I)$ be a random vector having $\cP$-Dirichlet distribution with constraints \eqref{constr} governed by a family $O_{\cP}$. Consider any perfect order  $o=(C_1,\ldots,C_K)\in O_{\cP}$ of the cliques. There exist independent vectors $({\bf p}^{R^{C_l}_i|Q^{C_l}_{i+1}}_{\m|\k},\,\m\in \I_{R^{C_l}_i})$ having classical Dirichlet distributions $\mathrm{Dir}(\nu_{(\k,\m)}^{Q_i^{C_l}},\m\in\I_{R^{C_l}_i})$, $\k\in \I_{Q^{C_l}_{i+1}}$, $i\in\{0,1,\ldots,j_{C_l}-1\}$, $l\in\{1,\ldots,K\}$, such that for any $\i\in\I$

$$
{\bf p}(\i)=\prod_{l=1}^K\,\prod_{i=0}^{j_{C_l}-1}\,{\bf p}^{R^{C_l}_i|Q^{C_l}_{i+1}}_{\m|\k},\quad \mbox{where}\;\;\m=\i_{R^{C_l}_i}\;\;\mbox{and}\;\;\k=\i_{Q^{C_l}_{i+1}}.
$$

\vspace{2mm}\noindent
Thus the density of the $\cP$-Dirichlet distribution can be written as

\begin{equation}
\label{analoghp}
\prod_{l=1}^K  \,\prod_{i=0}^{j_{C_l}-1}\, \prod_{\k\in \I_{Q^{C_l}_{i+1}}}\,\mathrm{Dir}(\nu^{Q^{C_l}_i}_{(\k,\m)},\m\in\I_{R^{C_l}_i}).
\end{equation}
\end{corollary}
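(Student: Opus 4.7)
The approach is to match moments against Theorem \ref{momenty}. Since the $\cP$-Dirichlet is supported on a compact subset of the probability simplex, it is uniquely determined by its joint moments, so it suffices to exhibit a distribution built from independent classical Dirichlets of the stated form whose moments reproduce \eqref{epr}. Concretely, I would introduce mutually independent auxiliary vectors $\tilde{{\bf p}}^{R^{C_l}_i|Q^{C_l}_{i+1}}_{\cdot|\k}\sim\mathrm{Dir}(\nu^{Q^{C_l}_i}_{(\k,\cdot)})$, one for each triple $(l,i,\k)$ with $1\le l\le K$, $0\le i\le j_{C_l}-1$, $\k\in\I_{Q^{C_l}_{i+1}}$, and set $\tilde{{\bf p}}(\i)=\prod_{l,i}\tilde{{\bf p}}^{R^{C_l}_i|Q^{C_l}_{i+1}}_{\i_{R^{C_l}_i}|\i_{Q^{C_l}_{i+1}}}$.

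By independence and the standard Dirichlet moment formula,
\begin{equation*}
\E \prod_{\i\in\I} \tilde{{\bf p}}(\i)^{r_\i} \,=\, \prod_{l=1}^K \prod_{i=0}^{j_{C_l}-1} \prod_{\k \in \I_{Q^{C_l}_{i+1}}} \frac{\prod_{\m \in \I_{R^{C_l}_i}} \bigl(\nu^{Q^{C_l}_i}_{(\k,\m)}\bigr)^{r^{Q^{C_l}_i}_{(\k,\m)}}}{\bigl(\sum_{\m} \nu^{Q^{C_l}_i}_{(\k,\m)}\bigr)^{r^{Q^{C_l}_{i+1}}_\k}}.
\end{equation*}
Each denominator sum is then replaced, via the constraint \eqref{constr} attached to the fixed family $O_{\cP}$, by $\mu^{Q^{C_l}_{i+1}}_\k$. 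Regrouping the pair $(\k,\m)$ as a single index running over $\I_{Q^{C_l}_i}$ and invoking Lemma \ref{lemmaq} to identify the multisets $\bigcup_l\{Q^{C_l}_0,\ldots,Q^{C_l}_{j_{C_l}-1}\}=\qq$ and $\bigcup_l\{Q^{C_l}_1,\ldots,Q^{C_l}_{j_{C_l}}\}=\pp$, the expression collapses to the right-hand side of \eqref{epr}. Moment determination then yields $\tilde{{\bf p}}\stackrel{d}{=}{\bf p}$, and the factorization of ${\bf p}(\i)$, the Dirichlet laws of the factors, and their mutual independence can all be read off directly from the construction of $\tilde{{\bf p}}$.

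The delicate part is the multiset identification. An element $Q\in\mathfrak{R}$ is a complete set that could in principle sit inside several cliques, so one must verify that the chains produced by Lemma \ref{lemmaq} for different $l$ neither double-count $Q$ nor any separator, nor omit any of them. This is precisely where the specific choice of a $\p$-perfect order $o\in O_{\cP}$ enters: by the numbering \eqref{numb}, each $Q\in\mathfrak{R}$ arises as $\q_{v_{l,k}}$ for a unique pair $(l,k)$ inside the chosen $\p$-perfect ordering, so $Q$ is placed in exactly one chain and contributes with the correct multiplicity both to $\qq$ and to $\pp$, and similarly for every $S_l\in\ss$.
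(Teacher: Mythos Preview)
Your argument is correct and is exactly the route the paper has in mind: the corollary is stated immediately after Theorem~\ref{momenty} with the remark that it ``follows from the expression \eqref{epr} of the moments,'' and your construction of $\tilde{\bf p}$ together with moment determinacy makes this precise. The only point worth noting is that the paper's proof of Theorem~\ref{momenty} in the Appendix already carries out, in the reverse direction, the telescoping you perform (see \eqref{d1}--\eqref{mius}): starting from the vertex-level $\p$-Dirichlet it cancels down to the block-indexed moment formula, whereas you build the blocks directly and compute up to \eqref{epr}. Your treatment of the multiset identification via the uniqueness of the pair $(l,k)$ in the $\p$-perfect numbering \eqref{numb} is precisely the mechanism the paper relies on implicitly through Lemma~\ref{lemmaq}.
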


\vspace{2mm}

\noindent We note that the well-known decomposition of the hyper Dirichlet density \eqref{hp} into
  \begin{eqnarray*}
C \prod_{l=1}^K\,\prod_{\k\in \I_{S_l}}\,\prod_{\m\in \I_{R_l}}            ({\bf p}^{R_l|S_l}_{\m|\k})^{\alpha^{C_l}_{(\m,\k)}-1},
\end{eqnarray*}
where $C$ is the normalizing constant, is a special case of \eqref{analoghp} where $j_c=1$ and $Q^{C_l}_{i+1}$ and $Q^{C_l}_i$ are respectively replaced by $S_l$ and $R_l=C_l\setminus S_l$.

\subsubsection{More examples and the hyper-Dirichlet as a special case of the ${\cP}$-Dirichlet} We now give a few examples of the $\cP$-Dirichlet distribution. We start with an example in which $\mathfrak{R}\neq\emptyset$.
\begin{example}
\label{41}
Let $G=(V,E)$ be a graph with $$V=\{1,2,3,4,5\}\quad\mbox{and}\quad E=\{\{1,3\},\,\{2,4\},\,\{3,4\},\,\{3,5\},\,\{4,5\}\}.$$ Then $\cc=\{\{1,3\},\,\{3,4,5\},\,\{2,4\}\}$ and $\ss=\{\emptyset,\,\{3\},\,\{4\}\}$.

Let $\mathfrak{P}=\{\p,\,\p'\}$, where
$$
\p_1=\emptyset,\;\;\p_2=\{4\},\;\;\p_3=\{1\},\;\;\p_4=\{3\},\,\,\p_5=\{3,4\}
$$
and
$$
\p'_1=\{3\},\;\;\p'_{2}=\emptyset,\;\;\p'_{3}=\{4\},\;\;\p'_4=\{2\},\;\;\p'_5=\{3,4\}.
$$

\bigskip
\begin{figure}[h]
\hskip -0.5cm
\psmatrix[mnode=circle,colsep=0.2,rowsep=1.5] 1 & & & & & 3 &  & & 4 & & & & & 2 & & & & & & 1 & & & & & 3 & & & 4 & & & & & 2 \\
 & & & & & & 5 & & & & & & & & & & & & & & & & & & & & 5 & & & & & &
\endpsmatrix
\psset{shortput=nab,arrows=-,labelsep=3pt}
\ncline{->}{1,1}{1,6}
\ncline{->}{1,6}{1,9}
\ncline{->}{1,9}{1,14}
\ncline{->}{1,6}{2,7}
\ncline{->}{1,9}{2,7}
\ncline{<-}{1,20}{1,25}
\ncline{<-}{1,25}{1,28}
\ncline{<-}{1,28}{1,33}
\ncline{<-}{2,27}{1,25}
\ncline{<-}{2,27}{1,28}
\vskip 0.3cm
DAG $\mathfrak{p}$ for Example \ref{41}.\quad\quad\quad\quad\quad\quad\hspace{3mm}\quad DAG $\mathfrak{p}'$ for Example \ref{41}.
\end{figure}

\vspace{2mm}
Then $$\mathfrak{Q}=\mathcal{C}\cup\{3,4\},\qquad \mathfrak{P}=\mathcal{S}\cup\{3,4\},$$
$$\mathfrak{R}_{\{3,4,5\}}=\{3,4\},\quad \mathfrak{R}_{\{1,3\}}=\mathfrak{R}_{\{2,4\}}=\emptyset.$$

Moreover, there is only one available collection of orders $O_{\cP}=\{o,o'\}$, where the $\p$-perfect order $o$ is the following $C_1=\{1,3\}$, $C_2=\{3,4,5\}$, $C_3=\{2,4\}$ and the $\p'$-perfect order $o'$ is the following $C_1'=\{2,4\}$, $C_2'=\{3,4,5\}$, $C_3'=\{1,3\}$.

Then formula \eqref{epr}  for moments becomes

$$
\E\,\prod_{\i\in\I}\,\left[{\bf p}(\i)\right]^{r_{\i}}=\tfrac{\prod_{\m\in\I_{13}}\,\left(\nu^{13}_{\m}\right)^{r^{13}_{\m}}\,
\prod_{\m\in\I_{24}}\,\left(\nu^{24}_{\m}\right)^{r^{24}_{\m}}\,\prod_{\m\in\I_{345}}\,\left(\nu^{345}_{\m}\right)^{r^{345}_{\m}}\,\prod_{\m\in\I_{34}}
\,\left(\nu^{34}_{\m}\right)^{r^{34}_{\m}}}
{(\mu)^r\,\prod_{m\in\I_3}\,\left(\mu^3_m\right)^{r^3_m}\,\prod_{m\in\I_4}\,\left(\mu^4_m\right)^{r^4_m}
\,\prod_{\m\in\I_{34}}\,\left(\mu^{34}_{\m}\right)^{r^{34}_{\m}}}$$

\vspace{2mm}\noindent
with the following consistency conditions:
$$
\mu=\sum_{\k\in\I_{13}}\,\nu^{13}_{\k}=\sum_{\k\in\I_{24}}\,\nu^{24}_{\k};
$$
$$
\mu^3_n=\sum_{k\in\I_4}\,\nu^{34}_{(n,k)}=\sum_{k\in\I_1}\,\nu^{13}_{(k,n)},\qquad n\in\I_3;
$$
$$
\mu^4_n=\sum_{k\in\I_3}\,\nu^{34}_{(k,n)}=\sum_{k\in\I_2}\,\nu^{24}_{(k,n)},\qquad n\in\I_4;
$$
$$
\mu^{34}_{\n}=\sum_{k\in\I_5}\,\nu^{345}_{(\n,k)},\qquad \n\in\I_{34}.
$$
Consequently, combining the above equations, we also get
$$
\mu=\sum_{n\in\I_3}\,\mu^3_n=\sum_{n\in\I_4}\,\mu^4_n=\sum_{\k\in\I_{34}}\,\nu^{34}_{\k}.
$$
\end{example}

We will now look at the case where $\mathfrak{R}=\emptyset$. We will see that the formula for moments simplifies and becomes closer to the moment formula for the hyper Dirichlet. Let us first  consider an example.

\begin{example}\label{42}
Consider a tree $G=(V,E)$ with $V=\{1,2,3,4,5\}$ and $E=\{\{1,2\},\,\{2,3\},\,\{2,4\},\,\{4,5\}\}$. Let $\cP=\{\p,\,\p'\}$, where
$$
\p_1=\emptyset,\;\;\p_2=\{1\},\;\;\p_3=\{2\},\;\;\p_4=\{2\},\,\,\p_5=\{4\}
$$
and
$$
\p'_1=\{2\},\;\;\p'_{2}=\{3\},\;\;\p'_{3}=\emptyset,\;\;\p'_4=\{2\},\;\;\p'_5=\{4\}.
$$

\bigskip
\begin{figure}[h]
\hskip -0.5cm

\psmatrix[mnode=circle,colsep=0.2,rowsep=1.5] 1 & & & & & 2 &  & & 4 & & & & & 5 & & & & & & 1 & & & & & 2 & & & 4 & & & & & 5 \\
 & & & & & & 3 & & & & & & & & & & & & & & & & & & & & 3 & & & & & &
\endpsmatrix
\psset{shortput=nab,arrows=-,labelsep=3pt}
\ncline{->}{1,1}{1,6}
\ncline{->}{1,6}{1,9}
\ncline{->}{1,9}{1,14}
\ncline{->}{1,6}{2,7}
\ncline{<-}{1,20}{1,25}
\ncline{->}{1,25}{1,28}
\ncline{->}{1,28}{1,33}
\ncline{->}{2,27}{1,25}
\vskip 0.3cm \quad\quad DAG $\mathfrak{p}$ for Example \ref{42}.\quad\quad\hspace{3mm}\quad\quad\quad\quad\quad DAG $\mathfrak{p}'$ for Example \ref{42}.
\end{figure}

\vspace{2mm}
Then $$\mathfrak{Q}=\mathcal{C}=E,\qquad \mathfrak{P}=\{\{2\},\{2\},\,\{4\}\}=\mathcal{S}.$$

Moreover $\mathfrak{R}_C=\emptyset$ for any $C\in\mathcal{C}$. For $S=\{4\}$ and $C=\{2,4\}$ it never happens that $S=S_l$ and $C=C_l$ for perfect orders of cliques $C_1,C_2,C_3,C_4$ generated by $\p$ or $\p'$.

That is the formula for moments as in \eqref{epr} reads

$$
\E\,\prod_{\i\in\I}\,\left[{\bf p}(\i)\right]^{r_{\i}}=\tfrac{\prod_{\m\in\I_{12}}\,\left(\nu^{12}_{\m}\right)^{r^{12}_{\m}}\,
\prod_{\m\in\I_{23}}\,\left(\nu^{23}_{\m}\right)^{r^{23}_{\m}}\,\prod_{\m\in\I_{24}}\,\left(\nu^{24}_{\m}\right)^{r^{24}_{\m}}\,\prod_{\m\in\I_{45}}\,\left(\nu^{45}_{\m}\right)^{r^{45}_{\m}}}
{(\mu)^r\,\prod_{m\in\I_2}\,\left(\mu^2_m\right)^{2r^2_m}\,\prod_{m\in\I_4}\,\left(\mu^4_m\right)^{r^4_m}},
$$

\vspace{2mm}\noindent
with the following consistency conditions: either we consider the set of orders: $\p$-perfect $o$ of the form $C_1=\{1,2\}$, $C_2=\{2,3\}$, $C_3=\{2,4\}$, $C_4=\{4,5\}$ and $\p'$-perfect $o'$ of the form $C_1'=\{2,3\}$, $C_2'=\{1,2\}$, $C_3'=\{2,4\}$, $C_4'=\{4,5\}$ and then the constraints
$$
\mu=\sum_{\k\in\I_{12}}\,\nu^{12}_{\k}=\sum_{\k\in\I_{23}}\,\nu^{23}_{\k},
$$
$$
\mu^{2(1)}_n=\sum_{k\in\I_1}\,\nu^{12}_{(k,n)}=\sum_{k\in\I_3}\,\nu^{23}_{(n,k)},\quad n\in\I_2,
$$
$$
\mu^{2(2)}_n=\sum_{k\in\I_4}\,\nu^{24}_{(n,k)},\quad n\in\I_2,
$$
$$
\mu^{4}_n=\sum_{k\in\I_5}\,\nu^{45}_{(n,k)},\quad n\in\I_4.
$$
or we keep $o$ as defined above and for $o'$ we take $C_1'=\{2,3\}$,  $C_2'=\{2,4\}$, $C_3'=\{4,5\}$ $C_4'=\{1,2\}$ and the constraints are more restrictive since while the first and the fourth line above are preserved the second and the third merge into
$$
\mu^{2(1)}_n=\mu^{2(2)}_n=\sum_{k\in\I_1}\,\nu^{12}_{(k,n)}=\sum_{k\in\I_3}\,\nu^{23}_{(n,k)}=\sum_{k\in\I_4}\,\nu^{24}_{(n,k)},\quad n\in\I_2.
$$

\end{example}

The above example falls under a more general setting which follows immediately from Theorem \ref{momenty} and which we formalize as:

\begin{corollary}\label{cordir}Let a random vector $({\bf p}(\i))_{\i\in\I}$ associated with the graph $G$ have a $\cP$-Dirichlet distribution for a family $\cP$ of moral DAGs and a collection $O_{\cP}$ of $\p$-perfect orders, $\p\in\cP$. If $\cP$ has the property $\mathfrak{Q}=\mathcal{C}$ (that is, $\mathfrak{P}=\mathcal{S}$ or $\mathfrak{R}=\emptyset$) then
\bel{mhd}
\E\,\prod_{\i\in\I}\,\left[{\bf p}(\i)\right]^{r_{\i}}=\tfrac{\prod_{C\in \mathcal{C}}\,\prod_{\m\in\I_C}\,\left(\nu^C_{\m}\right)^{r^C_{\m}}}{(\mu)^r\,\prod_{S\in \mathcal{S}}\,\prod_{\m\in\I_S}\,\left(\mu^S_{\m}\right)^{r^S_{\m}}},
\ee
with the following consistency conditions
\bel{cons}
\sum_{\m\in C:\m_S=\n}\,\nu^C_{\m}=\mu^S_{\n},\qquad \n\in\I_S
\ee
whenever $S\stackrel{o}{\to}C$ for some $o\in O_{\cP}$.
\end{corollary}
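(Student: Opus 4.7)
The plan is to derive this as a direct specialization of Theorem \ref{momenty} under the extra hypothesis $\mathfrak{R}=\emptyset$. First I would note that the three conditions are equivalent: from the identities $\mathfrak{Q}=\mathcal{C}\cup\mathfrak{R}$ and $\mathfrak{P}=\mathcal{S}\cup\mathfrak{R}$ (stated right after \eqref{pandq}), each of $\mathfrak{Q}=\mathcal{C}$, $\mathfrak{P}=\mathcal{S}$ and $\mathfrak{R}=\emptyset$ is equivalent to the others, so it is enough to argue under $\mathfrak{R}=\emptyset$. Then in the moment expression \eqref{epr}, the index sets $\qq$ and $\pp$ reduce to $\cc$ and $\ss$ respectively, and the numerator/denominator products collapse to the form appearing in \eqref{mhd}. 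The factor $(\mu)^r$ in the denominator corresponds to the empty separator $S_1=\emptyset\in\ss$, for which $\I_{\emptyset}$ is a singleton and $r^{\emptyset}=r$.

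The only remaining point is to match the constraints. Fix $C\in\cc$ and suppose $\ss\ni S\stackrel{o}{\to}C$ for some $o\in O_{\cP}$. Lemma \ref{lemmaq} organizes the elements of $\mathfrak{R}_C$ into the chain \eqref{seqq}, but since $\mathfrak{R}=\emptyset$ we have $\mathfrak{R}_C=\emptyset$, so the chain degenerates to $S=Q^C_1\subsetneq Q^C_0=C$ with $j_C=1$. Plugging $B=S=Q^C_1$ and $A=C=Q^C_0$ into \eqref{constr} gives exactly $\mu^S_{\n}=\sum_{\k\in\I_{C\setminus S}}\nu^C_{(\n,\k)}$, which is \eqref{cons}. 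Since Theorem \ref{momenty} is stated as an \emph{iff}, uniqueness of moments on a bounded support lets us conclude that the $\cP$-Dirichlet distribution is exactly the distribution with moments \eqref{mhd} under \eqref{cons}, so no step should present an obstacle: this is essentially a bookkeeping specialization of Theorem \ref{momenty}.
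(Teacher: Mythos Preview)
Your proposal is correct and matches the paper's approach exactly: the paper states that this corollary ``follows immediately from Theorem \ref{momenty}'' and gives no further argument, and what you have written is precisely the bookkeeping that justifies that claim. Your identification of $(\mu)^r$ with the $S_1=\emptyset$ contribution and your observation that $\mathfrak{R}_C=\emptyset$ collapses the chain \eqref{seqq} to $S=Q^C_1\subsetneq Q^C_0=C$ (so that \eqref{constr} reduces to \eqref{cons}) are exactly the right details to fill in.
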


The hyper-Dirichlet distribution  is, of course,  uniquely defined through moments of the form exactly the same as in \eqref{mhd} but the consistency conditions are even stronger: \eqref{cons} is satisfied for any $\mathcal{S}\ni S \subset C\in\mathcal{C}$, where $S=\emptyset\in\mathcal{S}$. Therefore a  $\cP$-Dirichlet distribution for the family $\cP$  of {\em all} moral DAGs with skeleton $G=(V,E)$ is a hyper-Dirichlet distribution. Actually,  it follows directly from Corollary \ref{cordir} above that a considerably smaller family $\cP$ forces the $\cP$-Dirichlet to be hyper-Dirichlet. We state this as follows.

\begin{theorem}\label{hyp-dir}
Let $\mathcal{L}$ be a $\cP$-Dirichlet distribution for a family $\cP$ of moral DAGs and a collection $O_{\cP}$ of $\p$-perfect orders, $\p\in\cP$. If
\bel{QQQ}
\mathfrak{R}=\emptyset
\ee
and  \bel{sos}\forall\,(S\in \ss,\,C\in\cc)\;\; \mbox{if}\;\; S\subset C\;\;\mbox{then}\;\;\exists\,o\in O_{\cP}\;\;\mbox{such that}\;\; S\stackrel{o}{\to}C\ee
then $\mathcal{L}$ is a hyper-Dirichlet distribution.
\end{theorem}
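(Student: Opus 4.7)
The plan is short: the theorem falls out of Corollary \ref{cordir} together with the observation that the hyper-Dirichlet is precisely what one gets from the moment formula \eqref{mhd} when \emph{every} pair $\ss\ni S\subset C\in\cc$ is consistent.

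First, I would invoke Corollary \ref{cordir}. The hypothesis $\mathfrak{R}=\emptyset$ is exactly the condition $\qq=\cc$ (equivalently $\pp=\ss$), so Corollary \ref{cordir} applies and gives
\[
\E\,\prod_{\i\in\I}\,[{\bf p}(\i)]^{r_{\i}}
=\frac{\prod_{C\in\cc}\prod_{\m\in\I_C}(\nu^C_{\m})^{r^C_{\m}}}
{(\mu)^r\,\prod_{S\in\ss}\prod_{\m\in\I_S}(\mu^S_{\m})^{r^S_{\m}}}
\]
with the partial consistency \eqref{cons} holding for every pair $(S,C)$ such that $S\stackrel{o}{\to}C$ for some $o\in O_{\cP}$.

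Second, I would use the hypothesis \eqref{sos} to upgrade this to \emph{full} consistency. By \eqref{sos}, for every pair $(S,C)$ with $S\in\ss$, $C\in\cc$, $S\subset C$, there exists some $o\in O_{\cP}$ with $S\stackrel{o}{\to}C$, and hence \eqref{cons} holds for every such pair. One must also handle $S=\emptyset$: since $\emptyset\in\ss$ and $\emptyset\subset C$ for every clique, condition \eqref{sos} forces $\sum_{\m\in\I_C}\nu^C_{\m}=\mu$ for each $C\in\cc$ (i.e., the total-mass constraint holds uniformly across all cliques).

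Third, I would compare this with the defining moments of the hyper-Dirichlet. From the density \eqref{hp} and the moment formula \eqref{moments}, together with \eqref{cdecalpha}, the hyper-Dirichlet law is uniquely determined by moments of exactly the form \eqref{mhd} (with $\nu^C_{\m}=\alpha^C_{\m}$, $\mu^S_{\n}=\alpha^S_{\n}$, $\mu=\alpha$) subject to \eqref{cons} for \emph{all} pairs $\ss\ni S\subset C\in\cc$, including $S=\emptyset$. Step two has produced precisely this system of constraints, so the moments of $\mathcal{L}$ coincide with those of a hyper-Dirichlet with parameters $(\nu^C,\,\mu^S,\,\mu)$. Since the support of $\mathcal{L}$ is bounded, moments determine the law, and we conclude $\mathcal{L}$ is hyper-Dirichlet.

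The only subtlety is really bookkeeping: checking that the moment expression from Corollary \ref{cordir} matches \eqref{moments} after rewriting the ratio of $\Gamma$-values in rising-factorial form, and making sure the $S=\emptyset$ case of \eqref{sos} is used to recover the first constraint in \eqref{cdecalpha}. There is no hard analytic step; the work is entirely in matching the constraint systems, so I expect no real obstacle beyond careful notation.
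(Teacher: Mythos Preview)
Your proposal is correct and follows exactly the route the paper intends: the paragraph preceding the theorem states that the result ``follows directly from Corollary \ref{cordir},'' and your three steps---apply Corollary \ref{cordir} under $\mathfrak{R}=\emptyset$, use \eqref{sos} to upgrade the constraints \eqref{cons} to all pairs $\ss\ni S\subset C\in\cc$ (including $S=\emptyset$), and identify the resulting moment formula with that of the hyper-Dirichlet---are precisely what the paper has in mind. There is no additional argument in the paper beyond this observation.
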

\section{The $\cP$-Dirichlet as a prior distribution}
\label{prior}
In this section, we  look at the properties of the  $\cP$-Dirichlet as a prior distribution.
We first compute  the dimension of the $\cP$-Dirichlet family for a given $\cP$ and show that it is always larger than the dimension of the  hyper Dirichlet family with the same skeleton $G$ (unless the set $\cP$ is so large that the corresponding $\mathfrak R$ is empty and \eqref{sos} holds, of course). We then show that the $\cP$-Dirichlet is a conjugate distribution with the directed strong hyper Markov property for every $\p\in \cP$. This, of course, makes the $\cP$-Dirichlet easy to use in a Bayesian model selection process. In Section 5.3, we argue that, when we have additional constraints on the direction of certain edges between variables,
 the $\cP$-Dirichlet arises naturally as  a conjugate prior when doing model selection in the class of models Markov with respect to a DAG with a given decomposable skeleton $G$ .
\subsection{Dimension of the $\cP$-Dirichlet family}
We are now going to show that the dimension of the parameter space of the $\cP$-Dirichlet distribution  is always greater (or equal)  than that of the hyper Dirichlet. This means, of course, that when choosing the $\cP$-Dirichlet as a prior rather than the hyper Dirichlet, we gain  flexibility in our choice of the hyper parameters. The dimensions of both families are given in the following theorem.

Let $G$ be a decomposable graph with ${\mathcal C}$ as its set of cliques and ${\mathcal S}$ as its set of separators.
If $\cP$ is a collection of DAG's with skeleton $G$ and $O_{\cP}$ a collection of $\p$-perfect orders, $\p\in\cP$, for $S\in {\mathcal S}$ given, we denote by $N_S$ be the number of cliques $C$ such that
 $$\mbox{if}\;\;S\subset C\;\;\mbox{then}\;\;\exists\,o\in O_{\cP}\;\;\mbox{such that}\;\; S\stackrel{o}{\to}C.$$
Recall for that for $v\in V$, $\I_v$ denotes the set of values that $X_v$ can take and $|\I_v|$ denotes the cardinality of $\I_v$.
\begin{theorem}
\label{count}
For $G$ and $\cP$ as given above, the dimension of the parameter space of the $\cP$-Dirichlet family of distributions is
\begin{equation}
\label{ncp}
{\mathcal N}_{\cP}=\sum_{Q\in {\mathfrak Q}}\prod_{v\in Q}|\I_v|-\sum_{S\in {\mathcal S}}(N_S-1)\prod_{v\in S}|\I_v|\;.
\end{equation}
The dimension of the parameter space of the hyper Dirichlet family of distributions with the same skeleton $G$ is equal to
\begin{equation}
\label{nhp}
{\mathcal N}_{HP}=\sum_{C\in {\mathcal C}}\prod_{v\in C}|\I_v|-\sum_{S\in {\mathcal S}}(N_S-1)\prod_{v\in S}|\I_v|\;.
\end{equation}
Moreover, if the $\cP$-Dirichlet is not identical to the hyper Dirichlet, we always have
\begin{equation}
\label{neq}
{\mathcal N}_{\cP}>{\mathcal N}_{HP}\;.
\end{equation}
\end{theorem}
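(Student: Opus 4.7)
The idea is to count the effective number of free real parameters in the moment parametrization of Theorem \ref{momenty}. The raw parameters are the positive reals $\nu^A_{\underline{m}}$ for $A\in\qq$ and $\underline{m}\in\I_A$, and their total cardinality is $\sum_{A\in\qq}\prod_{v\in A}|\I_v|$, which is the first term of \eqref{ncp}. The $\mu^B_{\underline{n}}$ for $B\in\pp$ contribute no new degrees of freedom, being defined by the equalities \eqref{constr}; instead they contribute equality constraints among the $\nu$'s whenever the same $\mu^B_{\underline{n}}$ can be obtained as a sum along more than one chain in $O_\cP$.

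I would then enumerate these independent equality constraints. For $B=S\in\ss$ and a fixed $\underline{n}\in\I_S$, every chain whose base is $S$, i.e.\ every pair $S\stackrel{o}{\to}C$ with $o\in O_\cP$, produces exactly one representation $\mu^S_{\underline{n}}=\sum_{\underline{k}}\nu^A_{(\underline{n},\underline{k})}$. By the definition of $N_S$ there are $N_S$ such representations, hence $N_S-1$ independent equality constraints per $\underline{n}$; summing over $\underline{n}\in\I_S$ and $S\in\ss$ produces the subtractive term in \eqref{ncp}. For an intermediate $B=R\in\mathfrak{R}$, I would use a vertex-by-residual analysis within each $\p$-perfect order of $O_\cP$ (each vertex lies in a unique residual of a given order, and $R=\q_v$ is rigidly determined across $\cP$) together with the observation that the free parameter $\nu^R$ may itself be adjusted to meet any prescribed value of $\mu^R_{\underline{n}}=\sum_{\underline{m}}\nu^R_{(\underline{n},\underline{m})}$, to argue that no additional independent equality on the $\nu^A$'s with $A\neq R$ arises at the $\mathfrak{R}$-level. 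Subtracting then yields \eqref{ncp}. Formula \eqref{nhp} is the same calculation applied to the hyper-Dirichlet, which by Theorem \ref{hyp-dir} corresponds to $\mathfrak{R}=\emptyset$ (so $\qq=\cc$) and to the pairing condition holding for every $S\subset C$, making $N_S$ equal to the total number of cliques containing $S$.

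For the strict inequality \eqref{neq}, denote the ``fully paired'' hyper-Dirichlet count by $N^{HP}_S$ (the number of cliques containing $S$). Comparing the two formulas gives
$$\mathcal{N}_\cP-\mathcal{N}_{HP}=\sum_{R\in\mathfrak{R}}\prod_{v\in R}|\I_v|+\sum_{S\in\ss}\bigl(N^{HP}_S-N_S\bigr)\prod_{v\in S}|\I_v|,$$
with both summands non-negative. Theorem \ref{hyp-dir} characterizes the equality of $\cP$-Dirichlet and hyper-Dirichlet as the conjunction $\mathfrak{R}=\emptyset$ and $N_S=N^{HP}_S$ for every $S$; therefore whenever the two distributions differ at least one summand is strictly positive, which is \eqref{neq}.

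The main obstacle is the constraint-counting step for $R\in\mathfrak{R}$. A priori the same set $R$ could occur as an intermediate link in chains rooted at different cliques or accessed through different orderings in $O_\cP$, giving rise to several representations of $\mu^R_{\underline{n}}$ and so to potentially new equality constraints on the $\nu^A$'s higher up the chain. Showing that these constraints are either absent or automatically implied by the separator-level constraints already tallied—essentially by exploiting both the uniqueness of the vertex-to-residual assignment inside any single $\p$-perfect order and the flexibility of $\nu^R$ to absorb the row-sum requirement without constraining any $\nu^A$ with $A\neq R$—is the delicate bookkeeping that has to be made precise in order to validate the formula \eqref{ncp}.
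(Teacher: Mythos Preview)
Your overall strategy matches the paper's exactly: count the $\nu^A_{\underline m}$ over $A\in\qq$, discard the $\mu^B_{\underline n}$ as defined quantities, and then subtract the genuine equality constraints among the $\nu$'s that arise when a given $\mu^B_{\underline n}$ has more than one representation via \eqref{constr}. Your treatment of the separator level and of the inequality \eqref{neq} is essentially the paper's (your use of two symbols $N_S$ and $N_S^{HP}$ is in fact cleaner than the paper's overloaded $N_S$).

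Where you diverge is at the $\mathfrak R$-level, which you flag as the ``main obstacle'' and propose to handle by a flexibility/absorption argument on $\nu^R$. The paper does not do any of that. It disposes of this step in one line with a structural observation you are missing: if $B\in\mathfrak R$, then $B$ sits in exactly one clique $C$, and hence there is a \emph{unique} $A$ with $B=Q^C_i\subsetneq A=Q^C_{i-1}$. Consequently the equation \eqref{constr} at $B$ is a single definition of $\mu^B_{\underline n}$, not a family of competing representations, so it imposes no constraint on the $\nu$'s at all. The reason is that, by the description in \eqref{numb} and the proof of Lemma~\ref{lemmaq}, any $R\in\mathfrak R$ arises as some $\q_{v_{l,k}}$ with $s_l<k<c_l$, so $R$ strictly contains the separator $S_l$ and therefore contains a vertex of the residual $C_l\setminus S_l$; this pins $R$ to the single clique $C_l$ and forces the immediate successor in the chain \eqref{seqq} to be the unique $Q^{C_l}_{i-1}$. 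Once you have this, there is no ``delicate bookkeeping'' left: the only constraints to subtract are the $(N_S-1)\prod_{v\in S}|\I_v|$ at the separator level, which you already counted, and \eqref{ncp} follows immediately.
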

\begin{proof}
From \eqref{epr} and \eqref{constr}, we see that the parameters are the $\nu^A_{\underline{m}}$ and we need not count the $\mu^B_{\underline{n}}$ since they are defined by the constraints of the type \eqref{constr}. There clearly are $\sum_{Q\in {\mathfrak Q}}\prod_{v\in Q}|\I_v|$ such parameters $\nu^A_{\underline{m}}$. They are not all free since an element $S\in {\mathcal S}$ can be equal to an element $Q^C_{j_C}$ for several $C\in {\mathcal C}$.
More precisely for all $C\in {\mathcal C}$ such that there exists $o\in O_{\cP}$ with $S\stackrel{o}{\to}C$, we would have
$$Q^C_{j_C}\varsubsetneq Q^C_{j_C-1}$$
and therefore by \eqref{constr}, we have $(N_S-1)$ equality of the type
$$\mu^S_{\n}=
\mu^{Q^C_{j_C}}_{\n}
=\sum_{\k\in I_{Q^C_{j_{C-1}}\setminus S }}\nu^{Q^C_{j_{C-1}}}_{(\n,\k)}\;$$
and thus $(N_S-1)$ constraints for a given $\nu^{Q^C_{j_{C-1}}}_{(\n,\k)}$ and thus a total of $(N_S-1)\prod_{v\in S}|\I_v|$ constraints for each $S$.
 We now note  that if $B\in {\mathfrak R}$,  that is if $B$ is not a separator, the corresponding equation \eqref{constr} is not a constraint since then there is only one clique to which $B$ can belong , i.e. only one $A$ such that
$$B=Q^C_i\varsubsetneq A=Q^C_{i-1}\;\;\mbox{and}\;\;\mu^B_{\n}=\sum_{\k\in I_{A\setminus B}}\nu^A_{(\n,\k)}\;.$$
It follows that \eqref{ncp} is proved.

 In the case of the hyper Dirichlet, a similar argument shows us that the total number of parameters is
 $\sum_{C\in {\mathcal C}}\prod_{v\in Q}|\I_v|$. The constraints given by equations of the type \eqref{constr} are of the form
$$\mu^S_{\underline{n}}=\sum_{\underline{k}\in C\setminus S}\nu^{C}_{(\n,\k)}$$
for any C containing $S$. Since considering the hyper Dirichlet is equivalent to taking $\cP$ as the set of all DAG's Markov equivalent to $G$, $N_S$ is nothing but the number of cliques containing $S$ and equation \eqref{nhp} follows.

To see that ${\mathcal N}_{\cP}$ is always strictly greater than ${\mathcal N}_{HP}$, we observe that unless $\mathfrak R=\emptyset$, that is the $\cP$-Dirichlet is the hyper Dirichlet, ${\mathcal C} \varsubsetneq  {\mathfrak Q}$ and therefore
$$\sum_{Q\in {\mathfrak Q}}\prod_{v\in Q}|\I_v|>\sum_{C\in {\mathcal C}}\prod_{v\in C}|\I_v|\;$$
Moreover, for each $S$, $N_S$ in the $\cP$-Dirichlet is less than or equal  to the corresponding $N_S$ in the hyper Dirichlet. Inequality \eqref{neq}  follows immediately.
 \end{proof}
 Let us illustrate this result by deriving the dimension of  $\cP$-Dirichlet and the hyper Dirichlet families respectively when $G$ and $\cP$ are as given in Example 4.1.
\begin{example}(Example 4.1 continued)
\label{41cont}
Let us assume that all variables are binary, that is $|\I_v|=2,\;v=1,2,3,4,5$. From the expression of the moments in \eqref{epr}, we see that the number of $\nu^A_{\m}$ parameters is equal to
4 for $(\nu^{13}_{\m}, \m\in \I_{13})$ plus 4 for $(\nu^{34}_{\m}, \m\in \I_{34})$ plus 4 for $(\nu^{24}_{\m}, \m\in \I_{24})$ plus 8 for  for $(\nu^{345}_{\m}, \m\in \I_{345})$. We therefore have a total of 20 parameters. The separators are  $\{3\}$ and $\{4\}$. We have
$$\{3\}\stackrel{o'}{\to} \{1,3\}\;\;\mbox{and} \;\;\{3\}\stackrel{o}{\to} \{3,4,5 \}$$
and therefore $N_{\{3\}}=2$. Similarly
$$\{4\}\stackrel{o}{\to} \{2,4\}\;\;\mbox{and} \;\;\{4\}\stackrel{o'}{\to} \{3,4,5 \}$$
and therefore $N_{\{4\}}=2$. According to \eqref{ncp}, the dimension of the $\cP$-Dirichlet family is
$${\mathcal N}_{\cP}=20-2-2=16.$$
For the hyper Dirichlet, the cliques $\{1,3\},\; \{3,4,5\}$ and $\{2,4\}$ yield respectively 4, 8 and 4 parameters for a total of 16 while the $N_S$ are the same as in the case of the $\cP$-Dirichlet and therefore, according to \eqref{nhp},
$${\mathcal N}_{HP}=16-2-2=12.$$
\end{example}
\subsection{Conjugacy and directed strong hyper Markov property}
We will now emphasize the properties of the $\cP$-Dirichlet that make it a useful prior for model selection in a restricted class of DAG's. In the following proposition, we state that for any $\p\in \cP$, the $\cP$-Dirichlet is strong directed hyper Markov and conjugate. We now recall the definition of the strong directed hyper Markov property.
Let $X=(X_1,\ldots,X_d)$ be a random variable Markov with respect to a DAG given by a parent function $\p$, with distribution parameterized by $\theta\in \R^k$ for some $k$, which itself follows a law ${\mathcal L}$.
Let $\theta_{\p_v}, \theta_{v|\p_v}$ and $\theta_{{nd}_v}$ denote respectively the parameters of the marginal distribution of $X_{\p_v}$, the conditional distribution of $X_v$ given $X_{\p_v}$ and the marginal distribution of the non descendants of $v$. Then the law ${\mathcal L}$ is said to be strong directed hyper Markov if we have the conditional independences
$$\theta_{v|\p_v}\perp \theta_{{nd}_v}|\;\theta_{\p_v}, \;v\in V.$$
With this definition, we see that the strong directed hyper Markov property of the $\cP$-Dirichlet follows by construction. We now state and prove that it is a conjugate family.

\begin{theorem}
Let the conditional distribution of cell counts ${\bf N}=({\bf N}(\i), \i\in \I)$ for ${\bf X}=(X_v, v\in V)$ given ${\bf p}=({\bf p}(\i), \i\in \I)$ be multinomial ${\mathcal M}_G({\bf p}(\i), \i\in \I)$ Markov with respect to the decomposable graph $G$. Let ${\bf p}$ follow a $\cP$-Dirichlet distribution with hyper parameters
$$\nu^{A}_{\m},\;\m\in \I_{A},\; A\in {\mathfrak Q}\qquad\mbox{and}\qquad
\mu^{B}_{\n},\; \n\in \I_{B},\; B\in {\mathfrak P}$$
as given in \eqref{epr} and \eqref{constr}. Then  the posterior distribution of ${\bf p}$ given ${\bf N}=(n(\i),\i\in\I)$  is $\cP$-Dirichlet with hyper parameters
$$n^A_{\m}+\nu^{A}_{\m},\;\m\in \I_A,\; A\in {\mathfrak Q}\qquad\mbox{and}\qquad
n^B_{\n}+\mu^{B}_{\n},\; \n\in \I_{B},\; B\in {\mathfrak P},$$
where $n^A_{\m}$ is the $A$-marginal count for $\i_A=\m$.

Moreover, for any $\p\in \cP$, the $\cP$-Dirichlet is strong hyper Markov.
\end{theorem}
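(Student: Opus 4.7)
The plan is to split the two assertions and treat them by quite different means: conjugacy via the moment formula \eqref{epr}, and the directed strong hyper Markov property as a direct consequence of Definition \ref{rep-dir}.

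For conjugacy, I would compute the posterior moments of $\mathbf{p}$ and recognize them as those of a $\cP$-Dirichlet with shifted hyper parameters. By the standard Bayes identity for multinomial observation,
$$\E\!\left[\prod_{\i\in\I}\mathbf{p}(\i)^{r_{\i}}\,\bigg|\,\mathbf{N}=\mathbf{n}\right]=\frac{\E\prod_{\i\in\I}\mathbf{p}(\i)^{r_{\i}+n(\i)}}{\E\prod_{\i\in\I}\mathbf{p}(\i)^{n(\i)}},$$
and I apply the prior moment formula \eqref{epr} to both numerator and denominator. Marginalization being linear, $(r+n)^A_{\m}=r^A_{\m}+n^A_{\m}$ and likewise on $B$, so by the rising-factorial identity $(\alpha)^{a+b}=(\alpha)^a(\alpha+a)^b$, every factor $(\nu^A_{\m})^{n^A_{\m}}$ cancels and leaves $(\nu^A_{\m}+n^A_{\m})^{r^A_{\m}}$, and the denominator factors $(\mu^B_{\n})^{\cdot}$ split in the same way. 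The outcome is precisely the right-hand side of \eqref{epr} with hyper parameters $\nu^A_{\m}+n^A_{\m}$ and $\mu^B_{\n}+n^B_{\n}$.

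Before concluding, I must check that the shifted parameters still satisfy the constraints \eqref{constr}. For every admissible pair $B=Q^C_i\subsetneq A=Q^C_{i-1}$ arising from some $S\stackrel{o}{\to}C$ with $o\in O_{\cP}$, the prior obeys $\mu^B_{\n}=\sum_{\k\in\I_{A\setminus B}}\nu^A_{(\n,\k)}$; adding the trivial count identity $n^B_{\n}=\sum_{\k\in\I_{A\setminus B}}n^A_{(\n,\k)}$ (valid since $B\subset A$) gives exactly the constraint for the shifted hyper parameters. Since the $\cP$-Dirichlet has bounded support its law is determined by moments (as already used in Theorem \ref{momenty}), so the posterior is the stated $\cP$-Dirichlet.

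For the strong directed hyper Markov property, fix any $\p\in\cP$. By Definition \ref{rep-dir} the families $(\mathbf{p}^{v|\p_v}_{m|\k})_{m\in\I_v}$, indexed by $v\in V$ and $\k\in\I_{\p_v}$, are mutually independent, and $\theta_{v|\p_v}$ is a function of the subfamily for index $v$. Since $w\in\mathfrak{n}\mathfrak{d}_v$ forces $\p_w\subseteq\mathfrak{n}\mathfrak{d}_v$, the marginal law of $\mathbf{X}_{\mathfrak{n}\mathfrak{d}_v}$ factorizes through $\{\mathbf{p}^{w|\p_w}:w\in\mathfrak{n}\mathfrak{d}_v\}$, so $\theta_{\mathfrak{n}\mathfrak{d}_v}$ depends only on this collection, which is disjoint from the one defining $\theta_{v|\p_v}$ (since $v\notin\mathfrak{n}\mathfrak{d}_v$). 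Hence $\theta_{v|\p_v}\perp\theta_{\mathfrak{n}\mathfrak{d}_v}$ marginally, and a fortiori conditionally on $\theta_{\p_v}$, which is itself a function of $\theta_{\mathfrak{n}\mathfrak{d}_v}$. The main obstacle is really bookkeeping in the conjugacy step: keeping the $\qq$-numerator and $\pp$-denominator factors synchronized and verifying that the constraint structure \eqref{constr} survives the shift, so that the updated parameters form an admissible $\cP$-Dirichlet parametrization.
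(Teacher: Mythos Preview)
Your proposal is correct and follows essentially the same route as the paper: conjugacy is established by computing posterior moments via the Bayes ratio, applying \eqref{epr} to numerator and denominator, using the rising-factorial identity $(\alpha)^{a+b}=(\alpha)^a(\alpha+a)^b$, and checking that the linear constraints \eqref{constr} are preserved under the shift by counts; the strong directed hyper Markov property is read off from the independences built into Definition~\ref{rep-dir}. Your treatment of the hyper Markov step is more explicit than the paper's one-line remark, but the underlying argument is the same.
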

\begin{proof}
The conditional distribution of ${\bf N}$ given ${\bf p}$ has the density (with respect to the counting measure) which, up to a multiplicative scalar, is equal to $\prod_{\i\in\I}\,[{\bf p}(\i)]^{N(\i)}$. Then, by the generalized Bayes formula for any table ${\bf r}=(r(\i),\,\i\in\I)$ of nonnegative integers
$$
\E\left(\left.\prod_{\i\in \I}\,[{\bf p}(\i)]^{r_{\i}}\right|{\bf N}=(n(\i),\,\i\in\I)\right)=\tfrac{\E\,\prod_{\i\in \I}\,[{\bf p}(\i)]^{r_{\i}+n(\i)}}{\E\,\prod_{\i\in \I}\,[{\bf p}(\i)]^{n(\i)}}.
$$
Applying \eqref{epr} to the numerator and denominator we see that the right hand side above can be written as
$$
\tfrac{\prod_{A\in\qq}\,\prod_{\m\in\I_A}\,(\nu^A_{\m})^{r^A_{\m}+n^A_{\m}}}{\prod_{A\in\qq}\,\prod_{\m\in\I_A}\,(\nu^A_{\m})^{n^A_{\m}}}
\tfrac{\prod_{B\in\pp}\,\prod_{\n\in\I_B}\,(\mu^B_{\n})^{n^B_{\n}}}{\prod_{B\in\pp}\,\prod_{\n\in\I_B}\,(\mu^B_{\n})^{r^B_{\n}+n^B_{\n}}}.
$$
Note that
$$
\tfrac{(\nu^A_{\m})^{r^A_{\m}+n^A_{\m}}}{(\nu^A_{\m})^{n^A_{\m}}}=(\nu^A_{\m}+n^A_{\m})^{r^A_{\m}}\qquad\mbox{and}\qquad \tfrac{(\mu^B_{\n})^{r^B_{\n}+n^B_{\n}}}{(\mu^B_{\n})^{n^B_{\n}}}=(\mu^B_{\n}+n^B_{\n})^{r^B_{\n}}.
$$
Consequently,
$$
E\left(\left.\prod_{\i\in \I}\,[{\bf p}(\i)]^{r_{\i}}\right|{\bf N}\right)=\tfrac{\prod_{A\in\qq}\,\prod_{\m\in\I_A}\,(\nu^A_{\m}+n^A_{\m})^{r^A_{\m}}}{\prod_{B\in\pp}\,\prod_{\n\in\I_B}\,(\mu^B_{\n}+n^B_{\n})^{r^B_{\n}}}
$$
and thus it follows from \eqref{epr} and the fact that the distribution is uniquely determined by moments that the posterior distribution of ${\bf p}$ given the counts ${\bf N}={\bf n}$ is $\cP$-Dirichlet with parameters updated by counts. We note that the  parameters: $\nu^A_{\m}+n^A_{\m}$, $A\in \qq$, and $\mu^B_{\n}+n^B_{\n}$, $B\in \pp$, of the posterior distribution of ${\bf p}$ satisfy the constraints of the type \eqref{constr}. Indeed, this is due to the facts that  these constraints are linear in the parameters, that the original parameters $\nu^A_{\m}$ and $\mu^B_{\n}$ satisfy such constraints by assumption and that such constraints are also trivially satisfied by the counts $n^A_{\m}$ and $n^B_{\m}$. This proves that the $\cP$-Dirichlet forms a conjugate family of distribution.

The directed strong hyper Markov property of the $\cP$-Dirichlet holds  true for every $\p\in \cP$ because of the independences (see Def. \ref{rep-dir})  contained in its construction.
\end{proof}

\subsection{Applications}
Even in a small example such as Example \ref{41cont}, we see that, by going from the hyper Dirichlet to the $\cP$-Dirichlet, we have substantially increased the number of parameters and that  the parameters $\nu^{345}_{\m},\;\m\in \I_{345}$ are no longer constrained to add up to $\mu$ as they would be in the hyper Dirichlet. We therefore have more flexibility for the choice of the  $(\nu^{345}_{\m},\;m\in \I_{345})$ than we would have in the hyper Dirichlet.

 Let us assume that a preliminary studies has determined that the set of conditional independences between these variables is represented by the decomposable graph $G$ in our example. We now want to find the DAG model that best fits the data. But let us assume also that we are told by an expert that vertices $3$ and $4$ must be parents of $5$. In our DAG model search, it then does not make sense to include DAG's that contain directed edges going from $5$ to $4$ or from $5$ to $3$. The hyper Dirichlet is a prior on the $(p(\i), \i\in \I)$ that includes all possible DAG and therefore the right thing to do, in that case, is to take the $\cP$-Dirichlet as a prior in order to exclude the possibilities of having arrow going from $5$ to $4$ or $3$. Since, as pointed out above, we have more flexibility in the choice of the $\nu^{345}_{\m},\;m\in \I_{345}$, we can choose them to put more prior weight on the edges $4\rightarrow 5$ and $3\rightarrow 5$ if we wish.

Such situations are discussed, for example, in Angelopoulos and Cussens (2008). The focus of that paper is structure learning and they advocate the use of independent Dirichlet priors on the ${\bf p}^{v|\p_v}_{m|\k}$, which is precisely what the $\cP$-Dirichlet does for a selected number of orders. There is, of course, the restriction that the underlying graph is decomposable but  searches can often be started in the space of decomposable graphical models. Situations where one would typically want to use the $\cP$-Dirichlet would be ones where conditional independence according to a decomposable graphs are imposed by an expert and where, moreover, we are told that a variable must be a parent of another or a variable must be a root node or a variable must be a  leaf node or any other situation where a pattern is imposed for some of the directed edges.

\section{Characterizations  by local and global independence}
\label{characterization}
\subsection{The $\cP$-Dirichlet and the hyper-Dirichlet}
We now briefly recall the definition of local and global independence
\begin{definition}
{\em Let ${\bf p}$ be a vector of random probabilities associated with the graph $G$. We say that local parameter independence holds for ${\bf p}$ with respect to a DAG with a parent function $\p$  if for any $v\in V$ the random vectors
$$
(\P_{\bf p}(X_v=l|X_{\p_v}=\k),\,l\in\I_v),\quad \k\in\I_{\p_v},
$$
are independent (non-degenerate) and we say that global parameter independence holds for ${\bf p}$ if the random vectors
$$
\left((\P_{\bf p}(X_v=l|X_{\p_v}=\k),\,l\in\I_v),\quad \k\in\I_{\p_v}\right),\quad v\in V
$$
are independent (non-degenerate).}
\end{definition}

It appears that for such families of DAGs local and global independence of parameters characterizes the $\cP$-Dirichlet distribution. Actually, we do not need to distinguish between the two properties. Therefore instead we combine them into one property of {\em parameter independence}.

\begin{definition}
{\em Let ${\bf p}$ be a vector of random probabilities associated with the graph $G$.  We say that  parameter independence holds for ${\bf p}$ with respect to a DAG with a parent function $\p$ if both local and global independence hold for ${\bf p}$ or equivalently if the random vectors
$$
(\P_{\bf p}(X_v=l|X_{\p_v}=\k),\,l\in\I_v),\quad \k\in\I_{\p_v},\quad v\in V
$$
are independent (non-degenerate).

Analogously,  we say that parameter independence holds for ${\bf p}$ with respect to a family $\cP$ of DAGs  if it holds for any DAG from $\cP$.}
\end{definition}

We immediately note that under the condition of parameter independence with respect to a DAG having a parent function $\p$ for any $d$-way table $\r=(r_{\i},\,\i\in\I)$ of non-negative integers we have
\bel{DAGexp}
\E\,\prod_{\i\in\I}\,\left[\P_{\bf p}({\bf X}=\i)\right]^{r_{\i}}=\prod_{v\in V}\,\prod_{\k\in I_{\p_v}}\,\E\,\prod_{l\in\I_v}\,\left[\P_{\bf p}(X_v=l|X_{\p_v}=\k)\right]^{r^{\q_v}_{\k,l}},
\ee
where
\begin{equation}\label{marginal}
r^{\q_v}_{\k,l}=\sum_{\i\in\I:\;\i_{\q_v}=(\k,l)}\,r_{\i}.
\end{equation}

This property of parameter independence was crucial for the characterization of the Dirichlet distribution (for a complete graph)  given in Geiger and Heckerman (1997). We will extend this characterization to the $\cP$-Dirichlet distribution for various families $\cP$ of DAGs including those yielding the hyper-Dirichlet law. Such families are described by what we call a separating property.

\begin{definition}\label{separdef}
{\em A family $\cP$ of DAGs with structure $G$ is called {\em separating} if
\bel{separ}
\forall\,v\in V\;\;\;\;\exists\,\p,\,\p'\in\cP\quad\mbox{such that}\quad \p_v\ne\p'_v,
\ee}
\end{definition}

In our main result we will show that for separating families of DAGs parameter independence characterizes the $\cP$-Dirichlet distribution.
\begin{theorem}\label{main1} Let ${\bf p}$ be a vector of random probabilities associated with the graph $G$.
Let $\cP$ be a separating family of moral DAGs for a decomposable graph $G=(V,E)$.

If parameter independence for ${\bf p}$ with respect to $\cP$ holds then ${\bf p}$ has a $\cP$-Dirichlet distribution.
\end{theorem}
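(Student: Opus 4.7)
Because ${\bf p}$ is supported on a compact subset of $[0,1]^{\I}$, its law is uniquely determined by the joint moments $F(\r):=\E\prod_{\i\in\I}\P_{\bf p}({\bf X}=\i)^{r_\i}$, $\r\in\N^{\I}$. The plan is to verify directly the equivalent of Definition \ref{def02}: for every $\p\in\cP$ the conditional vectors $({\bf p}^{v|\p_v}_{l|\k})_{l\in\I_v}$, $\k\in\I_{\p_v}$, $v\in V$, are independent classical Dirichlet. Parameter independence for $\p$ already supplies the independence of these vectors, so the substance of the proof is to identify each one as Dirichlet.

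Fix $\p\in\cP$. Combining parameter independence with \eqref{rep} yields, as in \eqref{DAGexp},
\begin{equation*}
F(\r)=\prod_{v\in V}\prod_{\k\in\I_{\p_v}}M^{v,\p}_\k\bigl(\r^{\q_v}_{\k,\cdot}\bigr),\qquad M^{v,\p}_\k(\mathbf{s}):=\E\prod_{l\in\I_v}\bigl[{\bf p}^{v|\p_v}_{l|\k}\bigr]^{s_l},
\end{equation*}
together with the analogous identity for any $\p'\in\cP$. Fix $v\in V$; by the separating property pick $\p,\p'\in\cP$ with $\p_v\neq\p'_v$. Equating the two expressions of $F(\r)$ and specializing $\r$ so that only its $(\q_v\cup\q'_v)$-marginals vary while all other cells are frozen to a prescribed configuration, collapses both sides to a pure functional identity in $M^{v,\p}_\cdot$ and $M^{v,\p'}_\cdot$ (together with a finite number of auxiliary moments attached to the rest of $V$, which one removes by dividing two instances of the identity). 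Iterating in the free counts $s_l$ and using $\sum_l{\bf p}^{v|\p_v}_{l|\k}=1$ to eliminate normalizations, one is driven to the multiplicative recursion
\begin{equation*}
\frac{M^{v,\p}_\k(\mathbf{s}+\mathbf{e}_l)}{M^{v,\p}_\k(\mathbf{s})}=\frac{\alpha^{v|\p_v}_{l|\k}+s_l}{\tilde\alpha^{\p_v}_\k+\sum_{l'}s_{l'}}
\end{equation*}
for positive constants $\alpha^{v|\p_v}_{l|\k}$ with $\tilde\alpha^{\p_v}_\k=\sum_l\alpha^{v|\p_v}_{l|\k}$, whose unique solution is the rising-factorial ratio characterizing $\mathrm{Dir}(\alpha^{v|\p_v}_{l|\k},l\in\I_v)$. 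Applied at every vertex this shows ${\bf p}$ is $\p$-Dirichlet, and since $\p\in\cP$ was arbitrary, ${\bf p}$ is $\cP$-Dirichlet.

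The main obstacle will be the derivation of the rising-factorial recursion in the middle step. This is the moment-theoretic substitute for the density-based functional equation used by Geiger and Heckerman (1997); the delicate point is, for each $v$, to choose $\r$ cleverly enough that equality of the two parent-side factorizations of $F(\r)$ reduces to a \emph{pure} equation in the conditional moments at the single vertex $v$, from which the Dirichlet rising-factorial structure can then be read off by induction in the counts $s_l$. The separating property is precisely what guarantees that such a reduction is available at every vertex of $G$; without it, some vertex would admit only one parent set across $\cP$ and the comparison would fail.
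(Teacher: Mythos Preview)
Your strategy is the paper's---moments, parameter independence to factorize $F(\r)$ for each $\p\in\cP$, the separating property to compare two parent sets at each $v$, reduction to a functional equation forcing Dirichlet---but the middle step, which you rightly flag as the obstacle, has two gaps as stated. First, the recursion $M^{v,\p}_\k(\mathbf{s}+\mathbf{e}_l)/M^{v,\p}_\k(\mathbf{s})=(\alpha^{v|\p_v}_{l|\k}+s_l)/(\tilde\alpha^{\p_v}_\k+|\mathbf{s}|)$ is the Dirichlet moment identity itself; it does not simply fall out of the comparison. What the paper actually extracts---after inserting an indicator $\ep$ at a single cell $\underline\tau$, writing the resulting identity for $\tau_v=\rho$ and $\tau_v=\sigma$, and dividing---is only the weaker equation $f(\x+\ep_\rho)/f(\x+\ep_\sigma)=h_\rho(x_\rho)/h_\sigma(x_\sigma)$ for \emph{unknown} univariate $h_\rho,h_\sigma$. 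Passing from this to the Dirichlet form requires a separate auxiliary result (the paper's Lemma~\ref{gammas}, taken from Sakowicz--Weso\l owski 2014), and one must also rule out the spurious product-of-powers solution $\prod_l A_l^{s_l}$ via the non-degeneracy clause in the definition of parameter independence. Your plan omits both the lemma and the degeneracy check.

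Second, ``freeze all cells outside $\q_v\cup\q'_v$'' is too coarse: the factors $M^{w,\p}_{\cdot}$ for $w\in\c_v$ (and likewise $w\in\c'_v$) see the $v$-coordinate through their $\q_w$-marginals and do not become constants under your specialization---this is exactly why the ``auxiliary moments attached to the rest of $V$'' do not divide away. The paper's sparse choice of $\r$ is indexed instead by $\d_v=\p'_v\cap\c_v$ (nonempty, or else $\d'_v=\p_v\cap\c'_v\neq\emptyset$, precisely because $\p_v\neq\p'_v$), and it is this choice that forces the children's contribution to factor as $a_\rho(x_\rho)/a_\sigma(x_\sigma)$, producing the $h$-functions above rather than a clean cancellation.
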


The proof is given in the Appendix.

The hyper-Dirichlet distribution is similarly characterized by parameter independence with respect to a $\cP$ which is rich enough. We state this result more precisely in the theorem below which is  an immediate consequence of Theorems \ref{main1} and \ref{hyp-dir}.

\begin{theorem}\label{main2} Let ${\bf p}$ be a vector of random probabilities associated with a decomposable graph $G$.
Let $\cP$, a separating family of moral DAGs with skeleton $G$,  satisfies \eqref{QQQ} and \eqref{sos}.

If parameter independence for ${\bf p}$ with respect to $\cP$ holds then ${\bf p}$ has a hyper-Dirichlet distribution.
\end{theorem}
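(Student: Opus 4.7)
My plan is to exploit the fact that the hypotheses of Theorem \ref{main2} are precisely the union of the hypotheses of Theorems \ref{main1} and \ref{hyp-dir}, so the proof should be a short two-step chain rather than an independent argument. The idea is: first use parameter independence on the separating family $\cP$ to conclude that ${\bf p}$ is $\cP$-Dirichlet (with some associated collection $O_{\cP}$ of $\p$-perfect orders), and then invoke the additional combinatorial hypotheses \eqref{QQQ} and \eqref{sos} to collapse this distribution into the hyper-Dirichlet.

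More concretely, I would first invoke Theorem \ref{main1}, whose hypotheses are exactly that $\cP$ is a separating family of moral DAGs for the decomposable graph $G$ and that parameter independence holds with respect to $\cP$. That theorem gives directly that ${\bf p}$ has a $\cP$-Dirichlet distribution; in particular, by Theorem \ref{momenty}, there is a collection $O_{\cP}$ of $\p$-perfect orders, $\p\in\cP$, for which the moments of ${\bf p}$ take the product form \eqref{epr} with the constraints \eqref{constr}. Now I would feed this output into Theorem \ref{hyp-dir}: the assumption \eqref{QQQ}, namely $\mathfrak{R}=\emptyset$, forces $\qq=\cc$ and $\pp=\ss$, which is exactly the setting of Corollary \ref{cordir} and simplifies the moment formula to the hyper-Dirichlet shape \eqref{mhd}. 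The second hypothesis \eqref{sos} ensures that for every pair $\ss\ni S\subset C\in\cc$ there is an $o\in O_{\cP}$ with $S\stackrel{o}{\to}C$, which upgrades \eqref{cons} to the full set of hyper-Dirichlet consistency conditions (namely that \eqref{cons} holds for \emph{every} such $(S,C)$ pair). Since the hyper-Dirichlet distribution is uniquely determined by moments of the form \eqref{mhd} under these full consistency conditions, ${\bf p}$ must be hyper-Dirichlet.

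The only subtle point I anticipate is making sure that the particular collection $O_{\cP}$ produced by Theorem \ref{main1} is actually compatible with the hypothesis \eqref{sos}, since \eqref{sos} is a statement about the existence of some collection of $\p$-perfect orders with the stated pairing property. However, as noted in the remark preceding Theorem \ref{hyp-dir}, hypothesis \eqref{sos} is a statement about $O_{\cP}$ itself (the collection with respect to which the $\cP$-Dirichlet is defined), so this is not really an additional obstacle: once the $\cP$-Dirichlet has the specified $O_{\cP}$, Theorem \ref{hyp-dir} applies verbatim. Thus the whole argument is one paragraph of bookkeeping, and the substantive content is packaged in Theorems \ref{main1} and \ref{hyp-dir}; no new moment calculations or characterization machinery need be introduced.
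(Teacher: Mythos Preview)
Your proposal is correct and matches the paper's own approach exactly: the paper states that Theorem \ref{main2} ``is an immediate consequence of Theorems \ref{main1} and \ref{hyp-dir}'' and gives no further argument. Your two-step chain (apply Theorem \ref{main1} to get a $\cP$-Dirichlet, then Theorem \ref{hyp-dir} under \eqref{QQQ} and \eqref{sos} to conclude hyper-Dirichlet) is precisely what is intended, and your remark about the role of $O_{\cP}$ simply makes explicit a point the paper leaves implicit.
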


\subsection{Special cases}
\subsubsection{The chain and the hyper Dirichlet distribution}
Let $G=(V,E)$ be a chain with vertices $V=\{1,\ldots,d\}$ and edges $E=\{\{i,i+1\},\,i=1,\ldots,d-1\}$. Then  $\cc=E$ and $\mathcal{S}=\{\{2\},\ldots,\{d-1\}\}$.

Consider the family $\cP=\{\p,\,\p'\}$, where
$$\p_1=\emptyset\quad\mbox{and}\quad\p_i=i-1,\quad i=2,\ldots,d,$$
and
$$\p'_d=\emptyset\quad\mbox{and}\quad\p'_i=i+1,\quad i=1,\ldots,d-1.$$
Note that $\cP$ is separating. Note also that $$\mathfrak{R}_{\p}=\p(V)\setminus\mathcal{S}=\{\{1\}\}\qquad\mbox{and}\qquad \mathfrak{R}_{\p'}=\p'(V)\setminus\mathcal{S}=\{\{d\}\}.$$ Therefore $\mathfrak{R}=\emptyset$ and  \eqref{QQQ} holds. Moreover, condition \eqref{sos} is clearly satisfied.
Thus from Theorem \ref{main2} we conclude the following result.
\begin{corollary}
Assume that the random vectors
\bel{1-n}
(\P_{\bf p}(X_j=l|X_{j-1}=k),\,l\in \I_j),\quad k\in \I_{j-1},\quad j=1,2\ldots,d
\ee
are jointly independent.

Assume also that the random vectors
\bel{n-1}
(\P_{\bf p}(X_j=l|X_{j+1}=m),\,l\in \I_j),\quad m\in \I_{j+1},\quad j=1,2\ldots,d
\ee
are jointly independent.

Then ${\bf p}$ has a hyper Dirichlet distribution with respect to $G$.

\end{corollary}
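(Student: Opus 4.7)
The plan is to read the corollary as a direct specialization of Theorem \ref{main2} to the chain, so the only work is to verify the hypotheses of Theorem \ref{main2} and to match the two joint-independence assumptions of the corollary to parameter independence with respect to $\cP=\{\p,\p'\}$. All three graph-theoretic conditions needed by Theorem \ref{main2}, namely that $\cP$ is separating, that $\mathfrak{R}=\emptyset$ (condition \eqref{QQQ}), and that condition \eqref{sos} holds, are in fact already pointed out in the paragraph preceding the statement; I would simply collect them into a single paragraph of the proof.

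For completeness I would give one sentence on why \eqref{sos} is satisfied. The interior separators are the singletons $\{j\}$ with $2\le j\le d-1$, and each is contained in exactly two cliques $\{j-1,j\}$ and $\{j,j+1\}$. The $\p$-perfect order $o=(\{1,2\},\{2,3\},\ldots,\{d-1,d\})$ pairs $\{j\}$ with $\{j,j+1\}$, while the $\p'$-perfect reverse order $o'=(\{d-1,d\},\{d-2,d-1\},\ldots,\{1,2\})$ pairs $\{j\}$ with $\{j-1,j\}$. Thus every $(S,C)$ with $S\in\ss$, $C\in\cc$, $S\subset C$ is realised by some $o\in O_{\cP}=\{o,o'\}$.

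Next I would match the two hypotheses of the corollary to parameter independence. Under $\p$ we have $\p_1=\emptyset$ and $\p_j=\{j-1\}$ for $j\ge 2$; with the convention that conditioning on $X_{\p_1}$ is vacuous, the family in \eqref{1-n} is exactly
\[
\bigl(\P_{\bf p}(X_j=l\mid X_{\p_j}=\k),\,l\in\I_j\bigr),\quad \k\in\I_{\p_j},\quad j\in V,
\]
so its joint independence is parameter independence of ${\bf p}$ with respect to $\p$. Symmetrically, $\p'_d=\emptyset$ and $\p'_j=\{j+1\}$ for $j\le d-1$, so \eqref{n-1} is parameter independence with respect to $\p'$. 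Together they give parameter independence with respect to $\cP$.

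Applying Theorem \ref{main2} then yields that ${\bf p}$ has a hyper Dirichlet distribution, which is the claim. The main (and only) point to be careful about is the bookkeeping at the boundary, namely interpreting the conditional probabilities for $j=1$ in \eqref{1-n} and for $j=d$ in \eqref{n-1} as unconditional marginals (because the corresponding parent set is empty). Once that convention is fixed the corollary is an immediate consequence of Theorem \ref{main2}, so I do not expect any substantive obstacle beyond writing the identifications above cleanly.
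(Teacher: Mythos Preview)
Your proposal is correct and follows exactly the paper's approach: the paragraph preceding the corollary already verifies that $\cP=\{\p,\p'\}$ is separating, that $\mathfrak{R}=\emptyset$, and that \eqref{sos} holds, and then states that the corollary follows from Theorem \ref{main2}. Your added detail on the two perfect orders realising \eqref{sos} and on the boundary convention (which the paper records immediately after the corollary as $X_0=X_{d+1}=0$, $\I_0=\I_{d+1}=\{0\}$) is accurate and simply makes explicit what the paper leaves implicit.
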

In the assumptions above we used the convention that $X_0=X_{d+1}=0$ and $\I_0=\I_{d+1}=\{0\}$.

Note that the family $\cP$ that we defined for the chain is the unique minimal separating family. That is any other family of DAGs is either non-separating or it contains $\cP$ as a proper subset.

Note also that for the two-chain (that is when $d=2$) we obtain the characterization of the classical Dirichlet distribution given in Th. 2 of Bobecka and Weso\l owski (2009). At the same time we extend the characterization given in Geiger and Heckerman (1997), Th. 2, where additionally to parameter independences as above it was assumed that densities exist and are sufficiently regular. Some of the regularity assumptions were considerably weakened in J\'arai (1998). More recently the entire Ch. 23 of the monograph J\'arai (2005) was devoted to this issue.
\subsubsection{The tree and the hyper Dirichlet distribution}
Let $G=(V,E)$ be a tree. As in  the case of the chain the set of cliques $\cc$ is equal to $E$ and  $\mathcal{S}=\{\{v\}:\,v\in V\setminus L\}$, where $L\subset V$ is the set of leaves, that is, those vertices which belong to exactly one edge. Any DAG  can be uniquely defined by  choosing a vertex $v$ such that $\p_v=\emptyset$. We will denote this DAG by $\mathcal{G}_v$.  Note that for any such  $\mathcal{G}_v$, for any $w\in V\setminus\{v\}$, the set $\p_w$  contains exactly one element .

Consider the family  $\cP=\{\mathcal{G}_v,\,v\in L\}$. Note that each vertex on the unique chain connecting $v$ and $w$ in $L$ has different parents in $\mathcal{G}_v$ and $\mathcal{G}_w$ and therefore $\cP$ is a separating family. Since any separator consists of only one vertex and any clique of only two vertices condition \eqref{sos} follows from the same observation. Since $\p_v(V)=\mathcal{S}\cup\{v\}$ it follows that $\mathfrak{R}_{\p_v}=\{\{v\}\}$. Consequently, \eqref{QQQ} is satisfied.
From Theorem \ref{main2} we have the following result for trees.

\begin{corollary}
Assume that for every leaf $v\in L$ the condition of parameter independence with respect to $\mathcal{G}_v$ holds.

Then $(\P_{\bf p}({\bf X}=\i),\,\i\in\I)$ follows the hyper Dirichlet distribution with respect to $G$.
\end{corollary}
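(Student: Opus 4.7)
The plan is to recognize this corollary as a direct specialization of Theorem \ref{main2} to the tree setting, taking $\cP=\{\mathcal{G}_v:\,v\in L\}$. The hypothesis of the corollary is exactly parameter independence for ${\bf p}$ with respect to $\cP$, so the work consists entirely in verifying the three structural assumptions of Theorem \ref{main2}: that $\cP$ is a separating family of moral DAGs with skeleton $G$, that condition \eqref{QQQ} holds, and that condition \eqref{sos} holds. Each verification exploits the simple structure of a tree, in which cliques are edges and separators are singletons $\{u\}$ with $u$ an internal (non-leaf) vertex.

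Morality of each $\mathcal{G}_v$ is immediate because in any DAG obtained from a tree by choosing a root, every non-root vertex has a unique parent (its neighbor on the path toward the root), precluding immoralities. For the separating property \eqref{separ}, given any vertex $w$, I would choose two distinct leaves $v_1,v_2\in L$ whose paths from $w$ leave $w$ through different incident edges; such a pair exists whenever $|V|\ge 2$. Then the parent of $w$ in $\mathcal{G}_{v_i}$ is the neighbor of $w$ on the path to $v_i$, and these differ. For \eqref{QQQ}, a direct inspection yields $\p_v(V)=\mathcal{S}\cup\{v\}$, so $\mathfrak{R}_{\p_v}=\{\{v\}\}$ for every leaf $v$; since distinct leaves contribute distinct singletons, $\mathfrak{R}=\bigcap_{v\in L}\mathfrak{R}_{\p_v}=\emptyset$.

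For \eqref{sos}, fix $S=\{u\}\in\mathcal{S}$ and a clique $C=\{u,w\}\in\cc$ with $S\subset C$. Since $u$ is a separator, $\deg_G(u)\ge 2$, so $u$ has a neighbor other than $w$, and the component of $G$ after deleting the edge $\{u,w\}$ that contains $u$ is a subtree with at least two vertices, hence contains a leaf $v$ of $G$. In $\mathcal{G}_v$ the edge between $u$ and $w$ is then oriented $u\to w$, i.e.\ $\p_w=\{u\}=S$, and any perfect order of the cliques of $\mathcal{G}_v$ rooted at $v$ pairs $S$ with $C$. Hence \eqref{sos} holds, and Theorem \ref{main2} delivers the conclusion.

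The only step requiring any care is the choice of leaf in the verification of \eqref{sos}; the argument is a short tree-theoretic observation, but it is the one place where the non-degeneracy of the family $\cP$ on trees really gets used. Once the three hypotheses are in place, the hyper-Dirichlet conclusion is automatic.
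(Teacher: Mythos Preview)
Your proposal is correct and follows the same route as the paper: set $\cP=\{\mathcal{G}_v:\,v\in L\}$ and verify the hypotheses of Theorem \ref{main2}. Your treatment of \eqref{sos} is in fact more explicit than the paper's, which dispatches it in a single clause.

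One small wording issue: in your separating argument you claim that for any vertex $w$ one can find two leaves $v_1,v_2$ whose paths from $w$ leave through different incident edges. This fails when $w$ is itself a leaf, since $w$ then has a single incident edge. The repair is immediate---take $v_1=w$ (so $\p_w=\emptyset$ in $\mathcal{G}_{w}$) and $v_2$ any other leaf (so $\p_w$ is the unique neighbor of $w$)---but the sentence as written does not cover this case. The paper's phrasing, which looks at the chain joining two leaves and observes that every vertex on it has different parents in the two rooted DAGs, handles both cases at once.
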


\subsubsection{The complete graph and the classical Dirichlet distribution}
Consider a complete graph $G=(V,E)$, for which we, of course, have $\mathcal{C}=\{V\}$ and $\mathcal{S}=\emptyset$. Consider two   DAGs with parent functions $\p$ and $\p'$. Let $v_1,\ldots,v_d$ be the numbering corresponding to $\p$ and $v'_1,\ldots,v'_d$ that corresponding to $\p'$. Let us assume moreover that
\bel{pp'}
\forall\,\,j=1,\ldots,d\quad \p_{v_j}\ne \p'_{v'_j}.
\ee
We claim that the family $\cP=\{\p,\,\p'\}$ is separating. Since the cardinality of $\p_{v_j}$ is equal to $j-1$, $\p_{v_j}=\p_{v'_k}$ implies $j=k$. But this is forbidden for $j=1,\ldots,d-1$ by condition \eqref{pp'}. Therefore the family is separating. Similarly, $\mathfrak{R}_{\p}=\p(V)=\{\p_{v_j},\,j=2,\ldots, d\}$ and $\mathfrak{R}_{\p'}=\p'(V)=\{\p'_{v'_j},\,j=2,\ldots,d\}$ cannot have a common element by the cardinality argument combined with \eqref{pp'}. Thus \eqref{QQQ} is satisfied.  We therefore have the following result.
\begin{corollary}\label{GHCCC}
If for a complete graph $G=(V,E)$ parameter independence holds for any two DAGs satisfying \eqref{pp'} then  $(\P_{\bf p}(\underline{X}=\i),\,\i\in\I)$ has a classical Dirichlet distribution.
\end{corollary}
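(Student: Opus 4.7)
The plan is to deduce Corollary \ref{GHCCC} from Theorem \ref{main2} applied to the two-element family $\cP=\{\p,\p'\}$. Since $G$ is complete we have $\cc=\{V\}$ and $\ss=\emptyset$, so a hyper-Dirichlet distribution on $G$ is precisely a classical Dirichlet on the simplex of $({\bf p}(\i),\,\i\in\I)$. Hence Theorem \ref{main2}, specialized to the complete graph, yields exactly the desired conclusion, provided its hypotheses are met. Condition \eqref{sos} is vacuous because $\ss=\emptyset$, so I need only check that $\cP$ is separating and that $\mathfrak{R}=\emptyset$.

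Both verifications rest on a single cardinality observation: in a complete graph, any perfect ordering $v_1,\ldots,v_d$ of the vertices forces $\p_{v_j}=\{v_1,\ldots,v_{j-1}\}$, so $|\p_{v_j}|=j-1$ (this follows immediately from the numbering \eqref{numb} since the unique clique is $V$ itself). For separation, fix $v\in V$; it appears as $v_j$ in the $\p$-ordering and as $v'_k$ in the $\p'$-ordering. If $\p_v=\p'_v$ then $j-1=k-1$, so $j=k$, which contradicts \eqref{pp'}. Because $\cP$ has only two elements, the conclusion $\p_v\ne \p'_v$ for every $v\in V$ is exactly the separating condition \eqref{separ}. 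The same cardinality argument handles $\mathfrak{R}=\p(V)\cap\p'(V)$: any common element $A$ would be of the form $A=\p_{v_j}=\p'_{v'_k}$, forcing $j=k$ and again violating \eqref{pp'}, so $\mathfrak{R}=\emptyset$, i.e., \eqref{QQQ} holds.

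There is no genuine obstacle here: the substance of the characterization is contained in Theorem \ref{main2}, and what remains is the bookkeeping to translate the pointwise distinctness condition \eqref{pp'} into the separating property and the condition $\mathfrak{R}=\emptyset$ on the multi-sets $\p(V),\p'(V)$. With both verified, Theorem \ref{main2} produces a hyper-Dirichlet, which in the complete-graph setting is the classical Dirichlet, completing the proof.
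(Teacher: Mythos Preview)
Your proposal is correct and follows essentially the same approach as the paper: the text immediately preceding the corollary uses exactly your cardinality argument (that $|\p_{v_j}|=j-1$ forces $j=k$ whenever $\p_{v_j}=\p'_{v'_k}$, which \eqref{pp'} forbids) to verify both the separating property and $\mathfrak{R}=\emptyset$, and then invokes Theorem \ref{main2}. Your write-up is slightly more explicit in noting that \eqref{sos} is vacuous and that the hyper-Dirichlet reduces to the classical Dirichlet when $\cc=\{V\}$ and $\ss=\emptyset$, but these are precisely the details the paper leaves implicit.
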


For $d=2$ this is the case of the chain discussed earlier in this section. In Heckerman, Geiger and Chickering (1995), Th. 7 (see also Geiger and Heckerman (1997), Th.3) the authors considered parameter independence for a  very special family of two DAGs with parent functions $\p$ and $\p'$ defined as follows
$$\p_1=\emptyset,\qquad \p_i=\{1,\ldots,i-1\},\quad i=2,\ldots,d,$$
and
$$\p'_i=\{d\}\cup\p_i,\quad i=1,\ldots,d-1,\qquad \p'_d=\emptyset.$$

\noindent Such a choice of family of DAGs was important for their proof in the case $d>2$, since then they could easily reduce the problem to the case $d=2$. As mentioned earlier, this case  was settled in Geiger and Heckerman (1997) under some smoothness assumptions on the density. Clearly, $\p$ and $\p'$ above satisfy \eqref{pp'} and their result follows by Corollary \ref{GHCCC} without assuming that the density exists.

Bobecka and Weso\l owski (2009) have exactly the same result as Corollary \ref{GHCCC} for $d=2$ only. Since they were interested in, so called, neutralities with respect to partitions, for their characterization of the Dirichlet for $d>2$, it was natural to assume parameter independences for the parameters of the marginal probability of one variable $X_l$ and those of the conditional probabilities of ${\bf X}_{V\setminus\{l\}}$ given $X_l$. They assumed that for any $l=1,\ldots,d$,
$$(\P_{\bf p}(X_l=i),\;i\in\I_l),\quad (\P_{\bf p}({\bf X}_{V\setminus\{l\}}=i_{V\setminus\{l\}}|X_l=i),\,\i_{V\setminus\{l\}}\in\I_{V\setminus\{l\}}),\quad i\in\I_l,$$
are independent. The approach was via moments and no density assumptions were required. A related result based on purely Bayesian argument was obtained in Ramamoorthi and Sangalli (2007). For related characterizations of the classical Dirichlet, see , e.g., Darroch and Ratcliff (1971), Fabius (1973), James and Mosimann (1980), Bobecka and Weso\l owski (2007), Chang et al. (2010), Sakowicz and Weso\l owski (2014) and the monograph Ng et al. (2011), Ch. 2.6.

\section{Conclusion}
\label{apps}
This paper makes two  contributions. The first one is practical and the second theoretical.
Let us consider a given set of variables and a given set of conditional independences between these variables which can be represented graphically by means of a moral DAG with skeleton $G$, decomposable. Assume that $\cP$ is the collection of a certain number of these moral DAG's and that we want to perform model selection in that class of DAG's.
Such a situation may arise when we are given a set of conditional independences and we are also given the direction of certain edges. In that case, we want to put a zero prior probability on any DAG that does not follow these requirements. Our new $\cP$-Dirichlet does just that and it has the advantage of increased flexibility in the choice of hyper parameters (due to the restricted number of DAG's in $\cP$). As we showed, the $\cP$-Dirichlet forms a conjugate family of prior distribution with the strong hyper Markov property.

Our theoretical contribution is a characterization based on local and global parameter independence of this new family of distributions and in particular of the hyper Dirichlet and the classical Dirichlet without the assumption of the existence of the density.

We would also like to emphasize that in the development of our new prior distribution, we have introduced new objects such as the sets $\pp$ and $\qq$ generalizing the notion of cliques and separators in a decomposable graph $G$. We have also shed light on the choice of two DAG's on a complete subset used by Geiger and Heckerman (1997) in their characterization by emphasizing that these two DAG's form a separating family  when $G$ is complete and that this particular choice of two DAG's is only one of many possible choices.

\vspace{5mm}\noindent
{\bf Acknowledgement.} H. Massam gratefully acknowledges support from NSERC Discovery Grant No A8947. J. Weso\l owski was partially supported by NCN grant 2012/05/B/ST1/00554.

\vspace{10mm}
\small
{\bf References}
\begin{enumerate}
\item
{\sc Angelopoulos, N. and  Cussens, J.}, Bayesian learning of
Bayesian networks with informative priors. {\em Annals
of Mathematics and Artificial Intelligence}, {\bf 54(1-3)} (2008),
53-98.

\item
{\sc Andersson, S. A., Madigan, D., Perlman, M. D. and Triggs, C. M. }, On the relation between conditional independence models determined by finite distributive lattices and directed acyclic graphs. {\em J. Statist. Plann. Infer.} {\bf 48} (1995), 25-46.

\item
{\sc Bobecka, K., Weso\l owski, J.}, The Dirichlet distribution and process through neutralities. {\em J. Theor. Probab.} {\bf 20} (2007), 295-308.

\item
{\sc Bobecka, K., Weso\l owski, J.}, Moments approach to characterizations of Dirichlet tables
through neutralities. {\em Publ. Math. Debrecen}  {\bf 74/3-4} (2009), 321-339.

\item {\sc Chang, W.Y., Gupta, R.D.,  Richards, D.St.P.},  Structural properties of the generalized Dirichlet distributions. In: {\em Algebraic Methods in Statistics and Probability, II} (M.A.G. Viana, H.P. Wynn, eds), {\em Contemp. Math.} {\bf 516} (2010), 109-124.

\item
{\sc Darroch, J., Ratcliff, D.}, A characterization of the Dirichlet distribution. {\em J. Amer. Statist. Assoc.} {\bf 66} (1971), 641-643.

\item
{\sc Dawid, P., Lauritzen, S.L.}, Hyper Markov laws in the statistical analysis of decomposable graphical models. {\em Ann. Statist.} {\bf 21} (1993), 1275-1317.

\item
{\sc Diaconis, P., Ylvisaker, D.}, Conjugate priors for exponential families. {\em Ann. Statist.} {\bf 7} (1979), 269-281.

\item
{\sc Fabius, J.}, Two characterizations of the Dirichlet distribution. {\em Ann. Statist.} {\bf 1} (1973), 583-587.

\item
{\sc Geiger, D., Heckerman, D.}, A characterization of the
Dirichlet distribution through global and local parameter
independence. {\em  Ann. Statist.} {\bf 25} (1997), 1344-1369.

\item
{\sc Heckerman, D. Geiger, D., Chickering, D.M.}, Learning
Bayesian networks: the combination of knowledge and statistical
data. {\em Machine Learn.} {\bf 20} (1995),  197-243.

\item
{\sc James, I.R., Mosimann, J.E.}, A new characterization of the Dirichlet distribution through neutrality. {\em Ann. Statist.} {\bf 8(1)} (1980), 183-189.

\item
{\sc Lauritzen, S.L.}, {\em Graphical Models}. Oxford Univ. Press, Oxford, 1996.

\item
{\sc J\'arai, A.}, Regularity property of the functional equation of the Dirichlet distribution.
{\em Aeq. Math.} {\bf 56} (1998), 37-46.

\item
{\sc  J\'arai, A.}, {\em Regularity Properties of Functional Equations in Several Variables}. Springer,
New York, 2005.

\item
{\sc Ng, K.W., Tian, G.-L., Tang, M.-L.}, {\em Dirichlet and Related Distributions, Theory, Methods and Applications}, Wiley, New York, 2011.

\item
{\sc Ramamoorthi, R.V., Sangalli, L.M.}, On a characterization of the Dirichlet distribution.
In: {\em Bayesian Statistics and Its Applications, (S. K. Upadhyay, U. Singh, D. K. Dey, eds.)},
Anamaya Publishers, New Delhi, 2007, 385-397.

\item
{\sc Sakowicz, A., Weso\l owski, J.}, Dirichlet distribution through neutralities with respect to two partitions. {\em J. Multivar. Anal.} {\bf 129} (2014), 1-15.

\item
{\sc Spiegelhalter, D., Lauritzen, S.}, Sequential updating of conditional probabilities on directed graphical structures.  {\em Networks}, {\bf 20}, 579-605, 1990.
\end{enumerate}

\normalsize
\vspace{5mm}
\section{Appendix}
\label{proofs}
\subsection{Proof of Theorem \ref{momenty}}
\begin{proof}
Assume that $({\bf p}(\i),\,i\in\I)$ is $\cP$-Dirichlet distributed. That is   representation \eqref{rep} holds and for any $\p\in\cP$, the random vectors   $({\bf p}^{v|\p_v}_{m|\k})_{m\in\I_v}$ follow a Dirichlet distribution $\mathrm{Dir}(\alpha^{v|\p_v}_{m|\k},\,m\in\I_v)$,  $\k\in\I_{\p_v}$, $v\in V$  and are independent.
That is, we have to identify the parameters $\alpha^{v|\p_v}_{m|\k}$, $m\in\I_v$, $\k\in\I_{\p_v}$, $v\in V$, such that (see \eqref{p-moments} and \eqref{summ})
\bel{dirmom}
\E\,\prod_{v\in V}\prod_{\k\in\I_{\p_v}}\,\prod_{m\in\I_v}\,\left({\bf p}^{v|\p_v}_{m|\k}\right)^{r^{\q_v}_{(\k,m)}}=\prod_{v\in V}\prod_{\k\in\I_{\p_v}}\,\tfrac{\prod_{m\in\I_v}\,(\alpha^{v|\p_v}_{m|\k})^{r^{\q_v}_{(\k,m)}}}{(\tilde{\alpha}^{\p_v}_{\k})^{r^{\p_v}_{\k}}},
\ee
where
\bel{diri}
\tilde{\alpha}^{\p_v}_{\k}=\sum_{m\in\I_v}\,\alpha^{v|\p_v}_{m|\k}\qquad\forall\,\k\in\I_{\p_v},
\ee
is equal to the right-hand side of \eqref{epr} with consistency conditions given in \eqref{constr}.

Note that the result in the opposite direction, that is the fact that the formula \eqref{epr} for moments, together with the constraints \eqref{constr}, implies $\cP$-Dirichlet distribution for $({\bf p}(\i),\,i\in\I)$, follows then immediately from the property that the distribution is uniquely determined by moments.

Consider an arbitrary collection $O_{\cP}$ of $\p$-perfect orders, $\p\in\cP$. Fix an arbitrary $\p\in\cP$ and consider a $\p$-perfect order $o\in O_{\cP}$. We will now relate the sets $Q^C_i$, $i=0,1,\ldots,j_C$, $C\in\cc$, to the numbering of vertices imposed by $o$ as given in \eqref{numb}. Clearly $C=C_l$ for some $l\in \{1,\ldots,K\}$. For ease of notation, we suppress the subscript $l$ in the remainder of this proof.  We define $j(l,i),\; i\in\{1,\ldots,j_C-1\}$ to be the index of the vertex  which satisfies
\bel{realQ}
Q^C_i=\q_{v_{l,j(l,i)}}=\p_{v_{l,j(l,i)+1}}\;.
\ee
We also note that
\bel{rest}
C=Q_0^C=\q_{v_l,c_l}\qquad\mbox{and}\qquad S=S_l=Q^C_{j_C}=\p_{v_{l,s_l+1}}.
\ee

We will now define the $\alpha^{v|\p_v}_{m|\k}$'s in terms of the $\nu^{A}$'s. For any $l\in\{1,\ldots,K\}$,  if $v=v_{l,j(l,i)}$,  set
\bel{d1}
\alpha^{v|\p_v}_{m|\k}:=\nu^{Q^C_i}_{(\k,m)}\qquad\forall\,(\k,m)\in\I_{Q_i^C}.
\ee
For any $l\in\{1,\ldots,K\}$,  if $v=v_{l,j}$ and $j\not =j(l,i)$ for any $i$,   we define \bel{ij}i_j=\min\{i:\,\q_v\subset Q^{C_l}_i\}\ee  and then set
\bel{d2}
\alpha^{v|\p_v}_{m|\k}:=\sum_{\n\in\I_{Q^C_{i_j}\setminus\q_v}}\,\nu_{(\n,m,k)}^{Q^C_{i_j}}\qquad \forall\,(\k,m)\in \I_{\q_v}.
\ee

We will now show that, if $v_{l,j}$ is such that $j\not =j(l,i)$ for any $i\in\{0,\ldots,j_C-1\}$, then
\bel{go}
\alpha^{v_{l,j}|\p_{v_{l,j}}}_{m|\k}=\tilde{\alpha}^{\p_{v_{l,j+1}}}_{(\k,m)}\quad\forall\,(\k,m)\in\I_{\p_{v_{l,j+1}}}.
\ee
Consider first the case  when $j(l,i_j)=j+1$. By  \eqref{d2} we have
$$
\alpha^{v_{l,j}|\p_{v_{l,j}}}_{m|\k}=\sum_{n\in\I_{v_{l,j+1}}}\,\nu_{(n,\k,m)}^{Q^C_{i_j}}
$$
Since $Q^C_{i_j}=\q_{v_{l,j+1}}$ from the above equality and \eqref{d1} we get
$$
\alpha^{v_{l,j}|\p_{v_{l,j}}}_{m|\k}=\sum_{\n\in\I_{v_{l,j+1}}}\,\alpha^{v_{l,j+1}|\p_{v_{l,j+1}}}_{n|(\k,m)}.
$$
Thus \eqref{go} follows from \eqref{diri}.

Second, consider the case $j(l,i_j)>j+1$. By \eqref{d2} we have
$$
\alpha^{v_{l,j}|\p_{v_{l,j}}}_{m|\k}=\sum_{\n\in\I_{Q^C_{i_j}\setminus \q_{v_{l,j}}}}\,\nu_{(\n,\k,m)}^{Q^C_{i_j}}=\sum_{n_1\in\I_{v_{l,j+1}}}\,
\sum_{\n_2\in\I_{Q^C_{i_j}\setminus\q_{v_{l,j+1}}}}\,\nu_{(n_1,\n_2,\k,m)}^{Q^C_{i_j}},
$$
where the second equality follows form the fact that $\q_{v_{l,j+1}}=\q_{v_{l,j}}\cup\{v_{l,j+1}\}$. Applying \eqref{d2} to the inner sum we obtain
$$
\alpha^{v_{l,j}|\p_{v_{l,j}}}_{m|\k}=\sum_{n_1\in\I_{v_{l,j+1}}}\,\alpha^{v_{l,j+1}|\p_{v_{l,j+1}}}_{n_1|(\k,m)}.
$$
Thus \eqref{go}  follows from \eqref{diri}.

Due to \eqref{go} we have cancelations in the right-hand side of \eqref{dirmom} and the only terms left are:
\begin{itemize}
\item in the numerator: $\alpha^{v|\p_v}_{m|\k}=\nu_{(\k,m)}^{Q^C_i}$ for $v=v_{l,j(l,i)}$, where $i\in\{0,\ldots,j_C-1\}$.

\item in the denominator: $\tilde{\alpha}^{\p_v}_{\k}$ for $v=v_{l,j(l,i)+1}$ where $i\in\{1,\ldots,j_C\}$.
\end{itemize}
In particular,  $j(l,j_C)=s_l$ in general and for $l=1$, $s_l=0$ so that, in the denominator, we have parameters indexed by $\p_{v_{l,s_l+1}}=S_l$ and $\p_{v_{1,1}}=\emptyset$.

To complete the proof, that is to show that the right-hand side of \eqref{dirmom} is equal to the right-hand side of \eqref{epr}, it remains to show that for any $l\in\{1,\ldots,K\}$ and $\i\in\{1,\ldots,j_C\}$,  we have
\bel{mius}
\mu^{Q^C_i}_{\k}=\tilde{\alpha}^{\p_{v_{l,j(l,i)+1}}}_{\k}\quad \forall\,\k\in\I_{Q^C_i}.
\ee

Note that \begin{itemize} \item[(i)] either $j(l,i)+1=j(l,i-1)$, that is $\q_{v_{l,j(l,i)+1}}=Q^C_{i-1}$, \item[(ii)] or $j(l,i)+1$ is not of the form $j(l,\tilde{i})$ for some $\tilde{i}\in\{i+1,\ldots,j_C\}$ (observe - see \eqref{ij} - that in this case we have $i_{j(l,i)+1}=i-1$). \end{itemize}

Let's consider  case (i) first. Using \eqref{diri} and then \eqref{d1} for all $\k\in\I_{Q^C_i}$ (note that $Q^C_i=\p_{v_{l,j(l,i)+1}}$) we obtain
$$
\tilde{\alpha}^{\p_{v_{l,j(l,i)+1}}}_{\k}=\sum_{m\in\I_{v_{l,j(l,i)+1}}}\,\alpha^{v_{l,j(l,i)+1}|\p_{v_{l,j+1}}}_{m|\k}
=\sum_{m\in\I_{v_{l,j(l,i)+1}}}\,\nu^{Q^C_{i-1}}_{(\k,m)}.
$$
Since $\{v_{l,j(l,i)+1}\}=\q_{v_{l,j(l,i)+1}}\setminus\q_{v_{l,j(l,j)}}=Q^C_{i-1}\setminus Q^C_i$, due to \eqref{constr} we obtain \eqref{mius}.

For case (ii),  we use again \eqref{diri} and then \eqref{d2} to arrive at
$$
\tilde{\alpha}^{\p_{v_{l,j(l,i)+1}}}_{\k}=\sum_{m\in\I_{v_{l,j(l,i)+1}}}\,\sum_{\n\in \I_{Q^C_{i-1}\setminus \q_{v_{l,j(l,i)+1}}}}\,\nu^{Q^C_{i-1}}_{(m,\n,\k)}=\sum_{(m,\n)\in\I_{Q^C_{i-1}\setminus \q_{v_{l,j(l,i)}}}}\,\nu^{Q^C_{i-1}}_{(m,\n,\k)},
$$
where the last equation follows from the fact that $\q_{v_{l,j(i,l)+1}}=\q_{v_{l,j(l,i)}}\cup\{v_{l,j(l,i)+1}\}$. Moreover $\q_{v_{l,j(l,i)}}=Q^C_i$, therefore \eqref{mius} follows now from  assumption \eqref{constr}.
\end{proof}
\subsection{Proof of Theorem \ref{main1}}
\begin{proof}
Note that according to Def. \ref{def02} it suffices to show that the formula for moments as given in \eqref{dirmom} and \eqref{diri} holds.

From \eqref{DAGexp} it follows that for any DAG from $\cP$ with parent function $\p$ and any $\r$
\bel{rmom}
\E\,\prod_{\i\in\I}\,\left[\P_{\bf p}({\bf X}=\i)\right]^{r_{\i}}=\prod_{v\in V}\,\prod_{\k\in \I_{\p_v}}\,f^{\p_v}_{\k}\left(r^{\q_v}_{\k,l},\,l\in \I_v\right),
\ee
where for any $v\in V$ and any $\k\in\I_{\p_v}$
$$
f^{\p_v}_{\k}(z_l,\,l\in\I_v)=\E\,\prod_{l\in\I_v}\,\left[\P_{\bf p}(X_v=l|{\bf X}_{\p_v}=\k)\right]^{z_l},\qquad z_l\in\{0,1,\ldots\},\; l\in \I_v.
$$

In order to prove Theorem \ref{main1} we  identify the functions $f^{\p_v}_{\k}$, $\k\in\I_{\p_v}$, $v\in V$. Our method relies on identification of the general form of the functions $f^{\p_v}_{\k}$, which will appears to be a ratio of products of gamma functions as in the formula for the moments of the $\cP$-Dirichlet distribution. Our main tool  is equation \eqref{tozs} of Lemma \ref{gammas}. The proof is divided into two parts, {\bf (a)} and {\bf (b)}. In part {\bf (a)} we transform the moment equation \eqref{rmom} into a the seemingly cumbersome but useful \eqref{crazy}. In part {\bf (b)} through a judicious choice of sparse $\r$'s in \eqref{crazy} we will obtain the general form of $f^{\p_v}_{\k}$'s using Lemma \ref{gammas}.

{\bf (a)} We first aim for the simplified functional equation \eqref{crazy}.
Fix an arbitrary $\underline{\tau}=(\tau_v\in\I_v,\,v\in V)$ and consider an $d$-way table $\ep=(\eps_{\i},\,\i\in\I)$ such that
$$
\eps_{\i}=\left\{\begin{array}{ll} 1, & \mbox{if}\;\;\i=\underline{\tau},\\
                                   0, & \mbox{otherwise}.\end{array}\right.
$$
Changing $\r$ into $\r+\ep$ in \eqref{rmom} we get
\begin{eqnarray} \label{fir}
\E\,\prod_{\i\in\I}\,\left[\P_{\bf p}({\bf X}=\i)\right]^{r_{\i}+\eps_{\i}}&=&\prod_{v\in V}\,[f^{\p_v}_{\tau_{\p_v}}\left(r^{\q_v}_{\tau_{\q_v}}+1,\;r^{\q_v}_{\tau_{\p_v},l},\,l\in\I_v\setminus\{\tau_v\}\right)\,  \\
&& \times\prod_{\k\in\I_{\p_v}\setminus\{\tau_{\p_v}\}}\,f^{\p_v}_{\k}\left(r^{\q_v}_{\k,l},\,l\in\I_v\right)]\nonumber
\end{eqnarray}

We will now obtain an equation of the type \eqref{tozs} by equating the right-hand side of \eqref{fir} for different $\p$'s from $\cP$.
Fix a DAG in $\cP$, that is a $\p\in\cP$, and fix a vertex $v\in V$. Then, by separation property \eqref{separ} there exists another DAG in $\cP$ with parent function $\p'$ such that $\p'_v\ne \p_v$. For each of $\p$ and $\p'$ the right-hand side of \eqref{fir} is split into three parts: the first (first line) concerns $v$, the second (second line) $\c_v$ and the third (third line) the remainder of $V$. Thus we obtain
\begin{eqnarray}\label{As}
&&\hspace{1.2cm}f^{\p_v}_{\tau_{\p_v}}\left(r^{\q_v}_{\tau_{\q_v}}+1,\;r^{\q_v}_{\tau_{\p_v},l},\,l\in\I_v\setminus\{\tau_v\}\right)\,
\prod_{\k\in\I_{\p_v}\setminus\{\tau_{\p_v}\}}\,f^{\p_v}_{\k}\left(r^{\q_v}_{\k,l},\,l\in\I_v\right)\nonumber\\
&&\prod_{w\in \c_v}\,f^{\p_w}_{\tau_{\p_w}}\left(r^{\q_w}_{\tau_{\q_w}}+1,\,
r^{\q_w}_{\tau_{\p_w},l},\,l\in\I_w\setminus\{\tau_w\}\right)\,
\prod_{\k\in\I_{\p_w}\setminus\{\tau_{\p_w}\}}\,f^{\p_w}_{\k}\left(r^{\q_w}_{\k,l},\,l\in\I_w\right)\nonumber\\
&&\prod_{w\not\in \c_v\cup\{v\}}\,f^{\p_w}_{\tau_{\p_w}}\left(r^{\q_w}_{\tau_{\q_w}}+1,\,
r^{\q_w}_{\tau_{\p_w},l},\,l\in\I_w\setminus\{\tau_w\}\right)\,
\prod_{\k\in\I_{\p_w}\setminus\{\tau_{\p_w}\}}\,f^{\p_w}_{\k}\left(r^{\q_w}_{\k,l},\,l\in\I_w\right)
\nonumber\\
&&\hspace{1cm} =
f^{\p'_v}_{\tau_{\p'_v}}\left(r^{\q'_v}_{\tau_{\q'_v}}+1,\;r^{\q'_v}_{\tau_{\p'_v},l},\,l\in\I_v\setminus\{\tau_v\}\right)\,
\prod_{\k\in\I_{\p'_v}\setminus\{\tau_{\p'_v}\}}\,f^{\p'_v}_{\k}\left(r^{\q'_v}_{\k,l},\,l\in\I_v\right)\\
&&\prod_{w\in \c'_v}\,f^{\p'_w}_{\tau_{\p'_w}}\left(r^{\q'_w}_{\tau_{\q'_w}}+1,\,
r^{\q'_w}_{\tau_{\p'_w},l},\,l\in\I_w\setminus\{\tau_w\}\right)\,
\prod_{\k\in\I_{\p'_w}\setminus\{\tau_{\p'_w}\}}\,f^{\p'_w}_{\k}\left(r^{\q'_w}_{\k,l},\,l\in\I_w\right)\nonumber\\
&&\prod_{w\not\in \c'_v\cup\{v\}}\,f^{\p'_w}_{\tau_{\p'_w}}\left(r^{\q'_w}_{\tau_{\q'_w}}+1,\,
r^{\q'_w}_{\tau_{\p'_w},l},\,l\in\I_w\setminus\{\tau_w\}\right)\,
\prod_{\k\in\I_{\p'_w}\setminus\{\tau_{\p'_w}\}}\,f^{\p'_w}_{\k}\left(r^{\q'_w}_{\k,l},\,l\in\I_w\right)\nonumber
\end{eqnarray}

We now write this equation above for two distinct values first for $\tau_v=\rho$ and then for $\tau_v=\sigma$ in $\I_v$, while keeping $\tau_k$ the same for all $k\ne v$. We obtain two equations, say $E_{\rho}$ and $E_{\sigma}$ and we then write the identity
\begin{equation}\label{Es}
\frac{\mathrm{lhs}(E_{\rho})}{\mathrm{lhs}(E_{\sigma})}=\frac{\mathrm{rhs}(E_{\rho})}{\mathrm{rhs}(E_{\sigma})}.
\end{equation}

Many simplifications occur (see part \ref{cra} of Appendix) and we arrive at
{\tiny \begin{eqnarray}\label{crazy}
&&\hspace{1.2cm}\frac{f^{\p_v}_{\tau_{\p_v}}\left(r^{\q_v}_{\tau_{\p_v},\rho}+1,\;r^{\q_v}_{\tau_{\p_v},l},\,l\in\I_v\setminus\{\rho\}\right)}
{f^{\p_v}_{\tau_{\p_v}}\left(r^{\q_v}_{\tau_{\p_v},\sigma}+1,\;r^{\q_v}_{\tau_{\p_v},l},\,l\in\I_v\setminus\{\sigma\}\right)}\nonumber\\
&&\prod_{w\in \c(v)}\,\frac{f^{\p_w}_{(\tau_{\p_w\setminus\{v\}},\rho)}\left(r^{\q_w}_{(\tau_{\p_w\setminus\{v\}},\rho),\tau_w}+1,\,
r^{\q_w}_{(\tau_{\p_w\setminus\{v\}},\rho),l},\,l\in\I_w\setminus\{\tau_w\}\right)}
{f^{\p_w}_{(\tau_{\p_w\setminus\{v\}},\sigma)}\left(r^{\q_w}_{(\tau_{\p_w\setminus\{v\}},\sigma),\tau_w}+1,\,
r^{\q_w}_{(\tau_{\p_w\setminus\{v\}},\sigma),l},\,l\in\I_w\setminus\{\tau_w\}\right)}\,
\frac{f^{\p_w}_{(\tau_{\p_w\setminus\{v\}},\sigma)}\left(r^{\q_w}_{(\tau_{\p_w\setminus\{v\}},\sigma),l},\,l\in\I_w\right)}
{f^{\p_w}_{(\tau_{\p_w\setminus\{v\}},\rho)}\left(r^{\q_w}_{(\tau_{\p_w\setminus\{v\}},\rho),l},\,l\in\I_w\}\right)}\nonumber\\
&&\hspace{3cm}=\frac{f^{\p'_v}_{\tau_{\p'_v}}\left(r^{\q'_v}_{\tau_{\p'_v},\rho}+1,\;r^{\q'_v}_{\tau_{\p'_v},l},\,l\in\I_v\setminus\{\rho\}\right)}
{f^{\p'_v}_{\tau_{\p'_v}}\left(r^{\q'_v}_{\tau_{\p'_v},\sigma}+1,\;r^{\q'_v}_{\tau_{\p'_v},l},\,l\in\I_v\setminus\{\sigma\}\right)}\\
&&\prod_{w\in \c'(v)}\,\frac{f^{\p'_w}_{(\tau_{\p'_w\setminus\{v\}},\rho)}\left(r^{\q'_w}_{(\tau_{\p'_w\setminus\{v\}},\rho),\tau_w}+1,\,
r^{\q'_w}_{(\tau_{\p'_w\setminus\{v\}},\rho),l},\,l\in\I_w\setminus\{\tau_w\}\right)}
{f^{\p'_w}_{(\tau_{\p'_w\setminus\{v\}},\sigma)}\left(r^{\q'_w}_{(\tau_{\p'_w\setminus\{v\}},\sigma),\tau_w}+1,\,
r^{\q'_w}_{(\tau_{\p'_w\setminus\{v\}},\sigma),l},\,l\in\I_w\setminus\{\tau_w\}\right)}\,
\frac{f^{\p'_w}_{(\tau_{\p'_w\setminus\{v\}},\sigma)}\left(r^{\q'_w}_{(\tau_{\p'_w\setminus\{v\}},\sigma),l},\,l\in\I_w\right)}
{f^{\p'_w}_{(\tau_{\p'_w\setminus\{v\}},\rho)}\left(r^{\q'_w}_{(\tau_{\p'_w\setminus\{v\}},\rho),l},\,l\in\I_w\}\right)}\nonumber
\end{eqnarray}}

{\bf (b)} We now simplify \eqref{crazy} further by writing it for properly chosen sparse  $\r$'s. This will lead us to functional equations for functions defined on $\I_v$.

We define
$$\d_v=\p'_v\cap\c_v\qquad\mbox{and}\qquad\d'_v=\p_v\cap\c'_v.$$
Note that due to the separation property \eqref{separ} at least one of them is not empty. Without loss of generality let us assume that $\d_v\neq\emptyset$. Fix $\xi_{\d_v}\in\I_{\d_v}$ such that $\xi_i\ne\tau_i$ for any $i\in\d_v$. For any $l\in\I_v$ denote by
$\i(l)$ the cell with labels
$$i_v=l,\;i_{\d_v}=\xi_{\d_v},\;i_y=\tau_y\;\mbox{for}\;y\not\in\d_v\cup\{v\}.
$$

Define $$x_l=r_{\i(l)}\,\qquad l\in\I_v.$$

Consider any $\r=(r_{\i})$ such that $r_{\i}=0$ for all $\i\not\in\{\i(l),\,l\in\I_v\}$.
Since $\p_v\cap\d_v=\emptyset$, by \eqref{marginal}
\begin{equation}\label{xl}
r^{\q_v}_{\tau_{\p_v},l}=x_l,\qquad l\in\I_v.
\end{equation}
Again, by \eqref{marginal} and since $\p'_v\supset \d_v\ne\emptyset$ we have
\begin{equation}\label{r0}
r^{\q'_v}_{\tau_{\p'_v},l}=0,\qquad l\in \I_v.
\end{equation}
Moreover, for $l\in\I_w$ and $k\in\I_v$ (particularly for $k=\rho$ or $k=\sigma$, which we shall use here)
$$
r^{\q_w}_{(\tau_{\p_w\setminus\{v\}},k),l}=\left\{\begin{array}{ll} x_k, & \mbox{if}\;\;\p_w\cap\d_v=\emptyset\;\mbox{and either}\; (w\not\in \d_v,\;\mbox{and}\;l=\tau_w)\;\;\mbox{or}\;(w\in\d_v\;\;\mbox{and}\;\;l=\xi_w), \\
0, & \mbox{otherwise} \end{array}\right.
$$
and
$$
r^{\q'_w}_{(\tau_{\p'_w\setminus\{v\}},k),l}=\left\{\begin{array}{ll} x_k, & \mbox{if}\;\;\p'_w\cap\d_v=\emptyset\;\mbox{and either}\; (w\not\in \d_v,\;\mbox{and}\;l=\tau_w)\;\;\mbox{or}\;(w\in\d_v\;\;\mbox{and}\;\;l=\xi_w), \\
0, & \mbox{otherwise}. \end{array}\right.
$$

These last two observations imply that the products $\prod_{w\in\c_v}$ and $\prod_{w\in\c'_v}$ in \eqref{crazy}  factor into a function of $x_\rho$ and a function of $x_{\sigma}$. Therefore their quotient can be written as $a_{v,\rho}(x_{\rho})/a_{v,\sigma}(x_{\sigma})$. Note, that potentially these functions may depend of $\p$ and $\p'$, but it will not impact our final result.

Moreover, by  \eqref{xl} and \eqref{r0} it follows that \eqref{crazy} assumes the form
$$
\frac{f^{\p_v}_{\tau_{\p_v}}\left(x_{\rho}+1,\;x_l,\,l\in\I_v\setminus\{\rho\}\right)}
{f^{\p_v}_{\tau_{\p_v}}\left(x_{\sigma}+1,\;x_l,\,l\in\I_v\setminus\{\sigma\}\right)}=K_v\,\frac{a_{v,\rho}(x_{\rho})}{a_{v,\sigma}(x_{\sigma})},
$$
where
$$
K_v=\frac{f^{\p'_v}_{\tau_{\p'_v}}\left(1_{\rho},\;0_l,\,l\in\I_v\setminus\{\rho\}\right)}
{f^{\p'_v}_{\tau_{\p'_v}}\left(1_{\sigma},\;0_l,\,l\in\I_v\setminus\{\sigma\}\right)}.
$$

Since $\tau_{\p_v}$ was arbitrary in $\I_{\p_v}$ we conclude from Lemma \ref{gammas} that for any $\k\in\I_{\p_v}$ either
\bel{fpv}
f^{\p_v}_{\k}(z_l,\,l\in\I_v)=\frac{\prod_{l\in\I_v}\,\left(A^{v|\p_v}_{l|\k}\right)^{z_l}}{\left(\tilde{A}^{\p_v}_{\k}\right)^{|z|}},
\ee
where $\tilde{A}^{\p_v}_{\k}=\sum_{l\in\I_v}\,A^{v|\p_v}_{l,\k}$ (recall that $(A)^z=A(A+1)\ldots(A+z-1)$ is the ascending Pochhammer symbol)
or it is a product of univariate power functions
\bel{fpv1}
f^{\p_v}_{\k}(z_l,\,l\in\I_v)=\prod_{l\in\I_v}\,\left[A^{v|\p_v}_{l|\k}\right]^{z_l}.
\ee
However the latter case is impossible due to the parameter independence assumption which requires that the distribution of the random vector $(\P_{\bf p}(X_v=l|{\bf X}_{\p_v}=\k),\,l\in\I_v)$ is non-degenerate.

We now want to identify the functions $f^{\p'_v}_{\k}$, $\k\in \I_{\p'_v}$. If $\d'_v\ne\emptyset$ we can repeat the argument used to derive $f^{\p_v}_{\k}$ and obtain an analogue of \eqref{fpv} with $\p$ replaced by $\p'$. If $\d'_v=\emptyset$ we need to do some more work. We will use another sparse $\r$ with new $\i(l)$'s defined by substituting $\d'_v$ for $\d_v$. Note that under this new sparsity pattern for any $k\in\I_v$ (particularly for $k=\rho$ or $k=\sigma$, which we shall use here)
$$
r^{\q_w}_{(\tau_{\p_w\setminus\{v\}},k),l}=r^{\q'_w}_{(\tau_{\p'_w\setminus\{v\}},k),l}=\left\{\begin{array}{ll} x_k & \mbox{if}\;l=\tau_w, \\
                                                                     0 & \mbox{if}\;l\ne \tau_w, \end{array}\right.
$$
and
$$
r^{\q_v}_{\tau_{\p_v},l}=r^{\q'_v}_{\tau_{\p'_v},l}=x_l,\qquad l\in\I_v.
$$

Thus, \eqref{crazy} becomes
$$
\frac{f^{\p_v}_{\tau_{\p_v}}\left(x_{\rho}+1,\;x_l,\,l\in\I_v\setminus\{\rho\}\right)}
{f^{\p_v}_{\tau_{\p_v}}\left(x_{\sigma}+1,\;x_l,\,l\in\I_v\setminus\{\sigma\}\right)}=\frac{a_{v,\rho}(x_{\rho})}{a_{v,\sigma}(x_{\sigma})}\,
\frac{f^{\p'_v}_{\tau_{\p'_v}}\left(x_{\rho}+1,\;x_l,\,l\in\I_v\setminus\{\rho\}\right)}
{f^{\p'_v}_{\tau_{\p'_v}}\left(x_{\sigma}+1,\;x_l,\,l\in\I_v\setminus\{\sigma\}\right)}.
$$

Plugging \eqref{fpv} into the left hand side above we obtain
$$
\frac{f^{\p'_v}_{\tau_{\p'_v}}\left(x_{\rho}+1,\;x_l,\,l\in\I_v\setminus\{\rho\}\right)}
{f^{\p'_v}_{\tau_{\p'_v}}\left(x_{\sigma}+1,\;x_l,\,l\in\I_v\setminus\{\sigma\}\right)}=\,\frac{A^{v|\p_v}_{\rho|\tau_{\p_v}}+x_{\rho}}{A^{v|\p_v}_{\sigma|\tau_{\p_v}}+x_{\sigma}}\,
\frac{a'_{v,\sigma}(x_{\sigma})}{a'_{v,\rho}(x_{\rho})}\;\;\mbox{or}\;\;\,\frac{A^{v|\p_v}_{\rho|\tau_{\p_v}}}{A^{v|\p_v}_{\sigma|\tau_{\p_v}}}\,
\frac{a'_{v,\sigma}(x_{\sigma})}{a'_{v,\rho}(x_{\rho})},
$$
respectively. Again we use Lemma \ref{gammas} to conclude that one of the representations \eqref{fpv} or \eqref{fpv1} (with $\p$ changed into $\p'$) holds also for $f^{\p'_v}_{\k}$ for any $\k\in\I_{\p'_v}$. Similarly, as above, we conclude that non-degeneracy implies that the representation given in \eqref{fpv1} is not a valid one.

Given an arbitrary $v\in V$, so far,  we have derived  the expression  of $f^{\p_v}_{\k}$ and $f^{\p'_v}_{\k}$ in \eqref{fpv} for an arbitrary separating pair $\p,\p'\in\cP$. Clearly, \eqref{fpv} is valid for any $\p\in\cP$ and any $v\in V$. Indeed, given $v\in V$ and the separating pair $\p,\,\p'$, consider another $\p''$. Then, either $\p''(v)\ne\p(v)$ and then we consider the pair $\p,\,\p''$ or $\p''(v)\ne\p'(v)$ and then we consider the pair $\p',\,\p''$.

Now, returning to \eqref{rmom} we see that for any $d$-way table $\r=(r_{\i},\,\i\in\I)$ of non-negative numbers,  any $\p\in\cP$, any $v\in V$ and any  $(\k,l)\in\I_{\q_v}$  there exist numbers $A^{v|\p_v}_{l|\k}$  such that
\bel{repr}
\E\,\prod_{\i\in\I}\,\left[\P_{\bf p}({\bf X}=\i)\right]^{r_{\i}}=\prod_{v\in V}\,\prod_{\k\in I_{\p_v}}\,\frac{\prod_{l\in\I_v}\,\left(A^{v|\p_v}_{l|\k}\right)^{r^{\p_v,v}_{\k,l}}}{\left(\tilde{A}^{\p_v}_{\k}\right)^{r^{\p_v}_{\k}}},
\ee
where
\bel{adot}
\tilde{A}^{\p_v}_{\k}=\sum_{l\in\I_v}\,A^{v|\p_v}_{l|\k}.
\ee
\end{proof}

\subsection{An auxiliary result on a functional equation}
\begin{lemma}\label{gammas}
Let $F$ be a positive function defined on $n$th cartesian product of non-negative integers such that $F(\underline{0})=1$ and
\bel{sum1}
F(\x)=\sum_{i=1}^n\,F(\x+\ep_i),
\ee
where $\ep_i$ has all  components equal to 0 except $i$th component which is 1.
Assume that for any distinct $p,q\in\{1,\ldots,n\}$
\bel{tozs}
\frac{F(\x+\ep_p)}{F(\x+\ep_q)}=\frac{h_p(x_p)}{h_q(x_q)}\qquad\forall\,\x=(x_1,\ldots,x_n)\in\{0,1,\ldots\}^n
\ee
for some functions $h_i$, $i=1,2,\ldots,n$.

Then  there  exists a vector $\underline{A}=(A_1,\ldots, A_n)\in\R^n$ such that $\forall\,\x=(x_1,\ldots,x_n)\in\{0,1,\ldots\}^n$ either
$$
F(\x)=\frac{\prod_{i=1}^n\,(A_i)^{x_i}}{(|\underline{A}|)^{|\x|}},
$$
where $|\underline{u}|=u_1+\ldots+u_n$ for any vector $\underline{u}=(u_1,\ldots,u_n)$ and $(a)^m=a(a+1)\ldots (a+m-1)$,
or
$$
F(\x)=\prod_{i=1}^n\,x_i^{A_i}.
$$
\end{lemma}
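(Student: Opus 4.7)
\emph{Proof plan for Lemma \ref{gammas}.} The plan is to convert the two hypotheses into a single forward recursion for $F$, and then to iterate that recursion after pinning down the form of the $h_i$. First I would rewrite \eqref{tozs} as
$$
\frac{F(\x+\ep_p)}{h_p(x_p)}=\frac{F(\x+\ep_q)}{h_q(x_q)},
$$
which says that the quantity $F(\x+\ep_i)/h_i(x_i)$ depends on $\x$ but not on $i$; call the common value $g(\x)$, so $F(\x+\ep_i)=h_i(x_i)\,g(\x)$ for every $i$. Substituting this into \eqref{sum1} gives $F(\x)=g(\x)\,H(\x)$, where $H(\x):=\sum_{j=1}^n h_j(x_j)$. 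Solving for $g$ and plugging back yields the central forward recursion
\bel{recPlan}
F(\x+\ep_i)=\frac{h_i(x_i)}{H(\x)}\,F(\x),\qquad i=1,\ldots,n,
\ee
valid at every $\x\in\{0,1,\ldots\}^n$. Positivity of $F$ together with $F(\underline{0})=1$ forces $h_i(x_i)>0$ (after the irrelevant rescaling of replacing every $h_i$ by a common positive multiple), so \eqref{recPlan} is meaningful.

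Next I would exploit the consistency of \eqref{recPlan}. Computing $F(\x+\ep_p+\ep_q)$ along the two paths $\x\to\x+\ep_p\to\x+\ep_p+\ep_q$ and $\x\to\x+\ep_q\to\x+\ep_p+\ep_q$ and equating, the $h_p(x_p)h_q(x_q)/H(\x)$ factor cancels and leaves
$$
h_p(x_p+1)-h_p(x_p)=h_q(x_q+1)-h_q(x_q)
$$
for every $p\neq q$ and every $\x$. Fixing $p$ and varying $q$ and $x_q$ shows that $h_q(x+1)-h_q(x)$ is a single constant $c$ independent of $q$ and $x$, and the same then follows for $p$. Setting $A_i:=h_i(0)$, we conclude $h_i(x)=A_i+cx$ for all $i$ and all $x\geq 0$.

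Finally I would split into two cases and integrate \eqref{recPlan} coordinate by coordinate starting from $F(\underline{0})=1$. If $c=0$ then $h_i(x)\equiv A_i$ and $H\equiv |\underline{A}|$, so \eqref{recPlan} reads $F(\x+\ep_i)=(A_i/|\underline{A}|)F(\x)$; induction on $|\x|$ yields $F(\x)=\prod_i(A_i/|\underline{A}|)^{x_i}$, which after relabeling is the product-of-powers form. If $c\neq 0$, rescale all $h_i$ by $1/c$ (legitimate because \eqref{recPlan} depends only on the ratio $h_i/H$); then $h_i(x)=A_i+x$ and $H(\x)=|\underline{A}|+|\x|$. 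Increasing the coordinates one at a time, the numerators build up the rising factorials $(A_i)^{x_i}=A_i(A_i+1)\cdots(A_i+x_i-1)$ and the denominators accumulate $(|\underline{A}|)^{|\x|}$, giving
$$
F(\x)=\frac{\prod_{i=1}^n (A_i)^{x_i}}{(|\underline{A}|)^{|\x|}},
$$
which is the first form.

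The main obstacle I expect is the step that promotes the local consistency equation $h_p(x_p+1)-h_p(x_p)=h_q(x_q+1)-h_q(x_q)$ to the global conclusion that $h_i$ is affine with a single common slope $c$: one must chain several instances of the identity, varying both the index and the base point, and verify that no domain restrictions spoil the argument. Sign/positivity bookkeeping (to justify the rescaling that normalizes all $h_i$ to be positive, and to rule out the degenerate solutions the functional equation a priori admits) is the other bit of care required, but is then straightforward from $F>0$.
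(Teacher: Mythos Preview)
Your argument is correct and self-contained. The paper, however, does not actually prove this lemma: immediately after stating it, the authors simply record that it ``is a special version of Lemma~3.1 from Sakowicz and Weso\l owski (2014)'' and ``closely related to the argument used in the proof of Theorem~2 in Bobecka and Weso\l owski (2009)'', and stop there. So there is no in-paper proof to compare against; you have supplied a direct argument where the authors defer to the literature.

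For what it is worth, your route --- extract the one-step recursion $F(\x+\ep_i)=h_i(x_i)F(\x)/H(\x)$ from \eqref{sum1} and \eqref{tozs}, use path-independence of $F(\x+\ep_p+\ep_q)$ to force the first differences $h_i(x+1)-h_i(x)$ to equal a common constant $c$, and then iterate the recursion from $F(\underline 0)=1$ in the two cases $c=0$ and $c\ne 0$ --- is exactly the mechanism used in the references the paper cites, so your proof and the one the authors point to are substantively the same.

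One small remark: the second alternative in the lemma as printed, $F(\x)=\prod_i x_i^{A_i}$, is evidently a misprint (it is incompatible with $F(\underline 0)=1$). The intended form is $F(\x)=\prod_i A_i^{x_i}$, which is precisely how the paper itself writes it when invoking the lemma in the proof of Theorem~\ref{main1} (see \eqref{fpv1}). Your $c=0$ case lands on exactly that expression, so the discrepancy is in the lemma's statement, not in your argument.
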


Lemma \ref{gammas}, as given above, is a special version of Lemma 3.1 from Sakowicz and Weso\l owski (2014) (it suffices just to take $A+\{1,\ldots,n\}$ in this lemma). It is also closely related to the argument used in the proof of Theorem 2 in Bobecka and Weso\l owski (2009).

\subsection{Proof of \eqref{crazy}}\label{cra}
$\,$

\vspace{2mm}\noindent
{\bf 1.} Let $A_1(\rho)$ and $A_1(\sigma)$ be the values of $A_1=\prod_{\k\in\I_{\p_v}\setminus\{\tau_v\}}\,f_{\k}^{\p_v}(r_{\k,l}^{\q_v},\,l\in\I_v)$, the second factor in the first line of the left hand side of \eqref{As}, for $\tau_v=\rho$ and $\tau_v=\sigma$, respectively. Let $A_1'(\rho)$ and $A_1'(\sigma)$ be the analog quantities for the right-hand side of \eqref{As}.  Clearly
$$
A_1(\rho)=\prod_{\k\in\I_{\p_v}\setminus\{\tau_{\p_v}\}}\,f_{\k}^{\p_v}(r_{\k,\rho}^{\q_v},\,r_{\k,\sigma}^{\q_v},\,r_{\k,l}^{\q_v},\,l\in\I_v\setminus\{\rho,\sigma\})=A_1(\sigma).
$$
Similarly $A_1'(\rho)=A_1'(\sigma)$.

\vspace{2mm}\noindent
{\bf 2.} Let $A_2(\rho)$ and $A_2(\sigma)$ be the values of the factor for fixed $w\in\c_v$ in the second line of the left hand side of \eqref{As}, for $\tau_v=\rho$ and $\tau_v=\sigma$, respectively. Let $A_2'(\rho)$ and $A_2'(\sigma)$ be the analog quantities for the right-hand side of \eqref{As}.  Clearly
\begin{eqnarray*}
A_2(\rho)&=&f_{(\tau_{\p_w\setminus\{v\}},\rho)}^{\p_w}(r^{\q_w}_{(\tau_{\p_w\setminus\{v\}},\rho),\tau_w}+1,\,r^{\q_w}_{(\tau_{\p_w\setminus\{v\}},\rho),l},l\in\I_w\setminus\{\tau_w\})\\ &&\hspace{0.5cm}f_{(\tau_{\p_w\setminus\{v\}},\sigma)}^{\p_w}(r^{\q_w}_{(\tau_{\p_w\setminus\{v\}},\sigma),l},\,l\in\I_w)\,\prod_{\k\in\I_{\p_w}\setminus\{\tau_{\p_w},(\tau_{\p_w\setminus\{v\}},\sigma)\}}\,
f_{\k}^{\p_w}(r^{\q_w}_{\k,l},\,l\in\I_w)
\end{eqnarray*}
and
\begin{eqnarray*}
A_2(\sigma)&=&f_{(\tau_{\p_w\setminus\{v\}},\sigma)}^{\p_w}(r^{\q_w}_{(\tau_{\p_w\setminus\{v\}},\sigma),\tau_w}+1,\,r^{\q_w}_{(\tau_{\p_w\setminus\{v\}},\sigma),l},l\in\I_w\setminus\{\tau_w\})\\ &&\hspace{0.5cm}f_{(\tau_{\p_w\setminus\{v\}},\rho)}^{\p_w}(r^{\q_w}_{(\tau_{\p_w\setminus\{v\}},\rho),l},\,l\in\I_w)\,\prod_{\k\in\I_{\p_w}\setminus\{\tau_{\p_w},(\tau_{\p_w\setminus\{v\}},\rho)\}}\,
f_{\k}^{\p_w}(r^{\q_w}_{\k,l},\,l\in\I_w).
\end{eqnarray*}
Note that the two sets appearing in the indices: $\{\tau_{\p_w},(\tau_{\p_w\setminus\{v\}},\sigma)\}$ in $A_2(\rho)$ and $\{\tau_{\p_w},(\tau_{\p_w\setminus\{v\}},\rho)\}$ in $A_2(\sigma)$ are identical. Therefore, for each $w\in\c_v$ the ratio $\frac{A_2(\rho)}{A_2(\sigma)}$ is equal to the factor in the second line of the left hand side of \eqref{crazy}. Similarly, for each $w\in\c_v$ the ratio $\frac{A_2'(\rho)}{A_2'(\sigma)}$ is equal to the factor in the second line of the right-hand side of \eqref{crazy}.

\vspace{2mm}\noindent
{\bf 3.} Since $v$  appears  in the third line of neither the left hand side nor the right-hand side of \eqref{As}, these lines cancel out in the ratios of \eqref{Es}.

\end{document}